\documentclass[11pt]{amsart}
\usepackage{amssymb}
\usepackage{graphicx} 
\usepackage{enumerate}
\usepackage{mathtools}
\usepackage{color,soul}
\usepackage{cite}
\usepackage{tikz}
\usetikzlibrary{matrix}
\usepackage{caption}
\usepackage{amsmath, amsthm}
\usepackage{mathtools}
 \usepackage{relsize}
 \usetikzlibrary{cd}
 \usetikzlibrary{decorations.pathreplacing}
 \usepackage{tikz,calc}
\usepackage{color}
\usepackage[margin=1.2in]{geometry}

\usepackage{multicol}

\usepackage{wrapfig}
\usepackage{cutwin}

\usepackage{lmodern}
\usepackage{enumitem}
\usepackage{lipsum}
\usepackage{stackengine}
\usepackage{appendix}
\usepackage{scrextend}

\newtheorem{thm}{Theorem}[section]
\newtheorem{thmx}{Theorem}

\newtheorem{prop}[thm]{Proposition}
\newtheorem{lem}[thm]{Lemma}
\newtheorem{cor}[thm]{Corollary}

\newtheorem{question}[thm]{Question}

\newcommand{\theoremname}{Theorem:}

\newtheorem*{conj*}{Conjecture}

  \theoremstyle{definition}
  \newtheorem{defn}[thm]{Definition}

  \newtheorem*{claim*}{Claim}

  \newtheorem*{question*}{Question}
  \newtheorem*{answer*}{Answer}
  \newtheorem*{application*}{Application}

  \theoremstyle{remark}
  \newtheorem{rmk}[thm]{Remark}
  \newtheorem*{rmk*}{Remark}

\usepackage{bbm}

\newcommand{\Odd}{\mathcal{O}_\phi}
\newcommand{\B}{\mathcal{B}}
\newcommand{\E}{\mathcal{E}_\phi}



  \DeclarePairedDelimiter\abs{\lvert}{\rvert}

  \newcommand{\from}{\colon\thinspace} 

\begin{document}

\title{{Finite image homomorphisms of the braid group and its generalizations}}

\author{Nancy Scherich}
\address{University of Toronto}
\email{n.scherich@utoronto.ca}
\urladdr{http://www.nancyscherich.com}

\author{Yvon Verberne}
\address{Mathematical Sciences Research Institute}
\email{yverberne@msri.org}


\thanks{}

\begin{abstract}
  Using totally symmetric sets, Chudnovsky, Kordek, Li, and Partin gave a superexponential lower bound on the cardinality of non-abelian finite quotients of the braid group. In this paper, we develop new techniques using \emph{multiple} totally symmetric sets to count elements in non-abelian finite quotients of the braid group. Using these techniques, we improve the lower bound found by Chudnovsky et al. We exhibit totally symmetric sets in the virtual and welded braid groups, and use our new techniques to find superexponential bounds for the finite quotients of the virtual and welded braid groups.
\end{abstract}
\maketitle

\section{Introduction}

The braid group is a versatile mathematical object which plays an important role in both topology and algebra. 
In this paper, we focus on the algebraic structure of the braid group, particularly on the size of its finite quotients. 
Many useful applications of the braid group rely on facts about finite quotients of the braid group. 
For example, the structure of Jones representations of the braid group are understood due to the fact that $B_n$ modulo the relation $\sigma_i^2=1$ is a finite group (the symmetric group, $\Sigma_n$) \cite{JONES}. 
Another example is the use of braid group representations in models of topological quantum computing. 
To have a universal quantum gate set, it is important to know the size and structure of the image of the braid group representation \cite{FLK}.

A guiding theory which motivates the work found in this paper is profinite rigity, or the idea of distinguishing groups by their finite quotients.
More specifically, one would like to understand the circumstances which allow finitely generated \textit{residually finite groups} to have isomorphic \textit{profinite completions}.
If a residually finite group $G$ is isomorphic to its profinite completion, we say that the group $G$ is \textit{profinitely rigid}.
In the context of braid groups and their generalizations, these groups are all residually finite, and the theory of profinite rigidity asks whether we can determine these groups by knowing only what their finite quotients are.
One step for studying whether a group $G$ is profinitely rigid is to determine which finite groups appear as finite quotients of the group $G$.
The work done in this paper is a step towards solving which subgroups appear as finite quotients of the braid group and its generalizations as we are providing a lower bound on the size of the non-cyclic finite quotients.
For more of an overview of recent work and progress on profinite rigidity, see \cite{Reid}.

To study finite quotients of the braid group, we consider homomorphisms $\phi \from B_n \to G$, where $G$ is a finite group. If $G$ is a cyclic group, then the quotient of $B_n$ will be a cyclic group.
A homomorphism is called \textit{cyclic} (resp. \textit{abelian}) if its image is a cyclic group (resp. an abelian group). 
One main focus of this paper is to understand the non-cyclic quotients of $B_n$. 
Work by Chudnovsky-Kordek-Li-Partin \cite{CKLP}, and more recently by Caplinger-Kordek \cite{CK}, proves a lower bound for the size of non-cyclic quotients of $B_n$.
In this paper, we provide an improved lower bound for the size of non-cylic quotients of $B_n$ by factor of $n$ to the result of Caplinger-Kordek, as found in Theorem \ref{thm:extendedbound}.

\begin{thmx}\label{thm:extendedbound}
Let $n> 5$, and let $\phi:B_n\to G$ be a non-cyclic homomorphism to a finite group, $G$. Then, 
\[\abs{\phi(B_n)} \geq  \left( \left\lfloor\frac{n}{2}\right\rfloor +1\right)(3^{\lfloor\frac{n}{2}\rfloor - 1})\left\lceil \frac{n}{2} \right\rceil!.\]
Moreover, if $p$ is the smallest integer so that $\phi(\sigma_i)^p=\phi(\sigma_j)^p$ for any $i$, $j$, then
\[ \abs{\phi(B_n)} \geq  \left(\left(\mathrm{lpf}(p)-1\right) \left\lfloor\frac{n}{2}\right\rfloor +1\right)(3^{\lfloor\frac{n}{2}\rfloor - 1})\left\lceil \frac{n}{2} \right\rceil!,\] 
where $\mathrm{lpf}(p)$ is the least integer greater than 1 that divides $p$.
\end{thmx}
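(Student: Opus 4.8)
Write $G_0 \coloneqq \phi(B_n)$ and $k \coloneqq \lfloor n/2\rfloor$, so that $n>5$ gives $k\ge 3$. All of the $\phi(\sigma_i)$ are conjugate in $G_0$, since the $\sigma_i$ are conjugate in $B_n$ (indeed $\sigma_{i+1}=(\sigma_i\sigma_{i+1})\sigma_i(\sigma_i\sigma_{i+1})^{-1}$), and in particular they share a common order. The odd generators $\sigma_1,\sigma_3,\dots,\sigma_{2k-1}$ form a totally symmetric set of size $k$ in $B_n$: they commute pairwise, and the half-twist supported on strands $2j-1,2j,2j+1,2j+2$ realises the transposition $\sigma_{2j-1}\leftrightarrow\sigma_{2j+1}$ by conjugation while fixing the remaining odd generators, so every permutation of the set is realised. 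By the behaviour of totally symmetric sets under homomorphisms, the image $Y\coloneqq\{\phi(\sigma_1),\phi(\sigma_3),\dots,\phi(\sigma_{2k-1})\}$ is a totally symmetric set of size $1$ or $k$ in $G_0$; size $1$ would give $\phi(\sigma_1)=\phi(\sigma_3)$, which, via a short computation with the braid relations among $\sigma_1,\sigma_2,\sigma_3,\sigma_4$, forces $\phi(\sigma_1)=\phi(\sigma_2)$ and hence makes $\phi$ cyclic (cf.\ \cite{CKLP}). So non-cyclicity forces $Y$ to have size $k$, i.e.\ the elements $y_i\coloneqq\phi(\sigma_{2i-1})$, $1\le i\le k$, are pairwise distinct.

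Next I extract the bound coming from the single totally symmetric set $Y$. Let $A\coloneqq\langle Y\rangle$, which is abelian, and let $N$ be the setwise stabiliser of $Y$ under conjugation in $G_0$. Total symmetry makes the conjugation action $N\to\operatorname{Sym}(Y)\cong\Sigma_k$ surjective, with kernel $C_{G_0}(A)$, so $|G_0|\ge|N|=k!\,|C_{G_0}(A)|\ge k!\,|A|$. To bound $|A|$ from below, set $q\coloneqq\mathrm{lpf}(p)$. For $i\ne j$ the element $y_iy_j^{-1}\in A$ is nontrivial and satisfies $(y_iy_j^{-1})^p=\phi(\sigma_{2i-1})^p\phi(\sigma_{2j-1})^{-p}=1$ by the definition of $p$, so it has order at least $q$; in particular each $y_i$ has order at least $q$. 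Consequently the $1+k(q-1)$ elements $\{1\}\cup\{\,y_i^{\,t}\;:\;1\le i\le k,\ 1\le t\le q-1\,\}$ are pairwise distinct: for a fixed $i$ this is immediate from $\operatorname{ord}(y_i)\ge q$, while a coincidence $y_i^{\,t}=y_j^{\,t'}$ with $i\ne j$ would, after conjugating by a permutation of $Y$ that transposes the $i$th entry with some $\ell$th entry ($\ell\ne i,j$, which exists because $k\ge 3$) and fixes the $j$th, yield $y_\ell^{\,t}=y_i^{\,t}$, forcing $\operatorname{ord}(y_\ell y_i^{-1})$ to divide $t\le q-1<q$, a contradiction. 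Hence $|A|\ge (q-1)k+1$, and already $|G_0|\ge\big((q-1)k+1\big)\,k!$.

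It remains to upgrade $k!$ to $\lceil n/2\rceil!$ and to insert the factor $3^{k-1}$, which is exactly where the technique of \emph{multiple} totally symmetric sets enters. One works simultaneously with the other totally symmetric sets carried by $B_n$ --- the even generators $\{\sigma_2,\sigma_4,\dots\}$, their conjugates by neighbouring generators, and the sets of fixed-size products of subsets of $\{\sigma_1,\sigma_3,\dots\}$, each again totally symmetric --- and tracks how the braid relations $\phi(\sigma_i)\phi(\sigma_{i+1})\phi(\sigma_i)=\phi(\sigma_{i+1})\phi(\sigma_i)\phi(\sigma_{i+1})$ couple their images. Each of the $k-1$ ``internal'' odd indices $i$ lives in a local configuration $\langle\phi(\sigma_{i-1}),\phi(\sigma_i),\phi(\sigma_{i+1})\rangle$, a non-cyclic finite quotient of $B_4$, and one shows that this produces a fresh subgroup of order divisible by $3$ that is transverse to everything counted so far; carrying this out compatibly over all $k-1$ indices multiplies the estimate by $3^{k-1}$. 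For $n$ odd, the two parity classes of generators together promote the symmetric-group factor from $\Sigma_k$ to $\Sigma_{k+1}=\Sigma_{\lceil n/2\rceil}$. The real work --- and the crux of the paper --- is the bookkeeping: one must show that the permutation-action contribution, the abelian contribution $(q-1)k+1$, and the $k-1$ local factors of $3$ lie in genuinely ``independent directions'' of $G_0$, so that their sizes multiply rather than overlap. Granting this, both displayed inequalities follow, the first being the case $q=2$ of the second.
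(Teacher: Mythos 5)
Your first two steps (splitting of the odd totally symmetric set for non-cyclic $\phi$, and the stabiliser argument giving $|G_0|\ge k!\,|C_{G_0}(A)|\ge k!\,|A|$) do match the paper's setup, but after that there is a genuine gap, precisely where the two factors in the bound are supposed to come from. First, the factor $3^{\lfloor n/2\rfloor-1}$ is never actually derived: in the paper it comes from showing that the common power $p$ satisfies $p\ge 3$ (if the generators' images had order $2$ then $\phi$ would factor through $\Sigma_n$; the minimal-quotient case is handled by citing Lemma 7 of Caplinger--Kordek), and then applying the Caplinger--Kordek lemma $|\langle S\rangle|\ge p^{k-1}$ to the abelian group generated by $\phi(S_{odd})$, which sits inside the kernel of the stabiliser's action. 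Your substitute --- that each internal odd index produces ``a fresh subgroup of order divisible by $3$ transverse to everything counted so far'' --- is asserted rather than proved, and your closing ``Granting this'' concedes exactly the step that would constitute the proof. Second, your factor $(q-1)k+1$ is located in the wrong part of the group: the elements $1,\,y_i^t$ you count all lie in $A=\langle Y\rangle$, i.e.\ inside the kernel of the action of the setwise stabiliser on $Y$, which is the very subgroup the bound $p^{k-1}\ge 3^{k-1}$ already estimates (and $p^{k-1}\ge (q-1)k+1$, so your count is subsumed by it). Consequently your two lower bounds cannot be multiplied, and nothing in the sketch yields the product $\bigl((\mathrm{lpf}(p)-1)\lfloor n/2\rfloor+1\bigr)\,3^{\lfloor n/2\rfloor-1}\,\lceil n/2\rceil!$.

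In the paper the factor $(\mathrm{lpf}(p)-1)\lfloor n/2\rfloor+1$ comes from a different source entirely: the even generators. Writing $\Gamma$ for the setwise stabiliser of $\phi(S_{odd})$ in $\phi(B_n)$, one shows that for $i$ even and $m$ prime to $p$ the element $\phi(\sigma_i)^m$ cannot lie in $\Gamma$ (it would have to fix $\phi(S_{odd})$ pointwise, hence be central in the image, forcing $\phi$ cyclic), and that the cosets $\phi(\sigma_i)^{m_1}\Gamma$ and $\phi(\sigma_j)^{m_2}\Gamma$ are pairwise distinct for suitable exponents; this produces $(\mathrm{lpf}(p)-1)\,|\phi(S_{even})|$ cosets disjoint from $\Gamma$, and multiplying the coset count by $|\Gamma|\ge \lceil n/2\rceil!\,3^{\lceil n/2\rceil-1}$ gives both inequalities at once. (Working with $S_{odd}$, which has $\lceil n/2\rceil$ elements, also removes the need for your unproved ``promotion'' from $\Sigma_{\lfloor n/2\rfloor}$ to $\Sigma_{\lceil n/2\rceil}$ when $n$ is odd.) None of this coset-counting appears in your sketch, so as written it does not establish either displayed inequality.
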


A secondary motivation for Theorem \ref{thm:extendedbound} is the following conjecture.

\begin{conj*}[Margalit \cite{CKLP}]
For $n \geq 5$, $\Sigma_n$ is the smallest finite, non-cyclic quotient of $B_n$.
\end{conj*}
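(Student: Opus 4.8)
\emph{Proof proposal.} The plan is to prove the equivalent numerical statement: for every $n\ge 5$ and every non-cyclic homomorphism $\phi\from B_n\to G$ to a finite group, $\abs{\phi(B_n)}\ge n!$, with equality attained by the permutation representation onto $\Sigma_n$. Replacing $G$ by $\phi(B_n)$, assume $\phi$ is onto, and argue by induction on $n$: the base case $n=5$ is handled directly, by analysing the finite quotients of $B_5$, while for $n\ge 6$ one carries out the inductive step. The key preliminary observation is that non-cyclicity descends to the subgroup $B_{n-1}=\langle\sigma_1,\dots,\sigma_{n-2}\rangle$: if $\phi|_{B_{n-1}}$ were cyclic then $\phi(\sigma_1)=\dots=\phi(\sigma_{n-2})$ (conjugate elements of an abelian group coincide), and since $\sigma_1$ commutes with $\sigma_{n-1}$ in $B_n$ this common value would commute with $\phi(\sigma_{n-1})$, so $G=\langle\phi(\sigma_1),\phi(\sigma_{n-1})\rangle$ would be abelian, hence cyclic, a contradiction. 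Thus $\abs{\phi(B_{n-1})}\ge(n-1)!$ by induction, and it remains to show either $[G:\phi(B_{n-1})]\ge n$ or $\abs{G}\ge n!$ outright.

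To continue, split according to the image $\overline{\ker\phi}$ of $\ker\phi$ in the permutation quotient $\Sigma_n=B_n/P_n$; since $A_n$ is simple for $n\ge 5$, this image is $1$, $A_n$, or $\Sigma_n$. If $\overline{\ker\phi}=1$ then $G$ surjects onto $\Sigma_n$ and $\phi(B_{n-1})$ maps into a point-stabilizer $\Sigma_{n-1}$, so $[G:\phi(B_{n-1})]\ge[\Sigma_n:\Sigma_{n-1}]=n$ and the induction closes. The remaining case is $\overline{\ker\phi}\in\{A_n,\Sigma_n\}$, so that $N:=\phi(P_n)$ is nontrivial of index at most $2$ in $G$, and one must show $\abs{N}\ge n!/2$ directly. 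Here $\phi(\sigma_i)$ has order at least $3$ — otherwise $\phi$ factors through $B_n$ modulo the normal closure of $\sigma_i^2$, which is $\Sigma_n$, whose only non-cyclic quotient for $n\ge 5$ is $\Sigma_n$ itself, returning us to the previous case — and $N$ is generated by the image of the $B_n$-conjugacy class of $\sigma_1^2=A_{12}$, on which $\Sigma_n$ acts through its (primitive, for $n\ge 5$) action on the $\binom n2$ two-element subsets of $\{1,\dots,n\}$. The strategy is then to analyse $N$ through the successive quotients of the lower central series of $P_n$: each nontrivial layer is a $\Sigma_n$-module whose composition factors are explicitly known (from the computations of Kohno and of Lehrer–Solomon for the associated graded and the cohomology of $P_n$), a layer all of whose constituents are trivial or sign is cyclic-by-cyclic and can be dispatched by hand, any other constituent contributes a factor growing with $n$, and Theorem~\ref{thm:extendedbound} eliminates the sub-cases in which $\phi(\sigma_i)$ has order divisible by a large prime; assembling the sizes of the surviving layers is intended to produce the factor $n!/2$.

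The main obstacle is precisely this last case: it amounts to a lower bound on the order of the smallest nontrivial finite quotient of $P_n$ that is compatible with the $B_n$-action and has index at most $2$ in the corresponding quotient of $B_n$ — equivalently, ruling out an exotic small quotient of $B_n$ that collapses the permutation representation. The threatening candidates are the images of the reduced Burau representation at roots of unity, the Assion symplectic and orthogonal quotients, and the Coxeter-type quotients of $B_n$ by the normal closure of $\sigma_i^k$ for small $k$; since totally symmetric sets alone yield only the bound of Theorem~\ref{thm:extendedbound}, which is strictly smaller than $n!$ for $n\ge 5$, a genuinely new ingredient is required. I expect it to be a lower bound on the minimal dimension of a linear representation of $B_n$ whose image is non-cyclic after passing to the permutation quotient — such a representation being forced to be large in order to carry the $\Sigma_n$-action on the $\binom n2$ generators above — together with standard estimates on the representation growth of symplectic and orthogonal groups over finite fields. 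This representation-theoretic input is where I anticipate the real difficulty; the rest should reduce to bookkeeping against the known $\Sigma_n$-module decompositions.
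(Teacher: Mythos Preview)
The statement you are attempting to prove is not a theorem in the paper: it is an open conjecture, attributed to Margalit, and the paper does not claim to prove it. The paper's contribution toward the conjecture is Theorem~\ref{thm:extendedbound}, which gives only the lower bound
\[
\abs{\phi(B_n)}\ge\left(\left\lfloor\tfrac{n}{2}\right\rfloor+1\right)3^{\lfloor n/2\rfloor-1}\left\lceil\tfrac{n}{2}\right\rceil!,
\]
and this is strictly smaller than $n!$ for all $n\ge 5$. So there is no ``paper's own proof'' to compare against; any complete argument here would be a resolution of an open problem.

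Your proposal is, by your own admission, not a proof: the inductive reduction and the case split on $\overline{\ker\phi}$ are reasonable, and the case $\overline{\ker\phi}=1$ does close cleanly, but the case $\overline{\ker\phi}\supseteq A_n$ is the entire content of the conjecture, and you do not resolve it. You describe a strategy (analyse $N=\phi(P_n)$ via the $\Sigma_n$-module layers of the lower central series of $P_n$, then invoke minimal-degree bounds for linear representations), but you explicitly write that ``a genuinely new ingredient is required'' and that you only ``expect'' the representation-theoretic input to suffice. That is the gap: the known module decompositions of the graded quotients of $P_n$ do not by themselves prevent a small quotient, and the lower bounds on faithful linear degree for $B_n$ that exist in the literature do not straightforwardly transfer to arbitrary finite quotients with $P_n\not\subseteq\ker\phi$. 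The candidates you list (Assion's symplectic and unitary quotients, Burau at roots of unity, quotients by the normal closure of $\sigma_i^k$) are exactly the obstruction: one must check each family beats $n!$, and no uniform argument is known. Until that case is handled, the proposal is a plausible outline, not a proof.
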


Theorem \ref{thm:extendedbound}, and the results by Chudnovsky-Kordek-Li-Partin and Caplinger-Kordek, attempt to rule out smaller possible non-cyclic quotients of $B_n$.
When $n=5,6,$ and $7$, Caplinger and Kordek used the classification of finite groups to conclude that a non-cyclic quotient of $B_n$ must be larger than $n!$ \cite{CK}. 
Since Theorem \ref{thm:extendedbound} gives a lower bound on the size of the image of a non-cyclic homomorphism for $n>5$, it therefore gives the tightest known lower bound for the size of a finite non-cyclic quotient of $B_n$ for $n \geq 8$.

Theorem \ref{thm:extendedbound} shows that the existence of a non-cyclic homomorphism $\phi \from B_n \to G$ requires the group $G$ to be quite large or complicated. 
To see this, recall that all finite groups embed in a large enough symmetric group, $\Sigma_k$, which implies that we can consider the target group $G$ in Theorem \ref{thm:extendedbound} to be $\Sigma_k$.
When $n\geq 6$, and $k<n$, homomorphisms $B_n\to \Sigma_k$ must be cyclic \cite{Artin}. 
Therefore, if the group $G$ embeds into a small enough symmetric group, the image of $\phi$ must be cyclic. 
However, less is known about when $k\geq n$.
One step to understand the case where $k \geq n$ was provided by Lin who showed that for $6<n<k<2n$ all transitive homomorphisms $B_n\to \Sigma_k$ are cyclic \cite{LIN}. 
Since there exist cyclic maps $B_n\to \Sigma_k$ wit $k>n$, one could ask which other types of non-cyclic homomorphisms can exist.

We prove Theorem \ref{thm:extendedbound} using totally symmetric sets inside $B_n$. 
A \textit{totally symmetric set} is a commutative set that satisfies a highly symmetric conjugation relation. 
The theory of totally symmetric sets was first introduced by Kordek and Margalit when studying homomorphisms of the commutator subgroup of $B_n$ \cite{KM}. 
More recently, totally symmetric sets were used by Caplinger-Kordek \cite{CK} and  Chudnovsky-Kordek-Li-Partin \cite{CKLP} when studying finite quotients of the braid group. 
Totally symmetric sets are useful for counting arguments since the image of a totally symmetric set $S$ under a homomorphism $\phi$ has size $\abs{\phi(S)}=\abs{S}$ or $\abs{\phi(S)}=1$.
In this paper, our approach is novel in the sense that we create counting arguments using multiple totally symmetric sets at once.

From the perspective of virtual knot theory, $B_n$ can be generalized to the virtual braid group, $vB_n$. 
One way to think of $vB_n$ is as an extension of $B_n$ by the symmetric group $\Sigma_n$, where the added permutations are the virtual crossings. 
The welded braid group, $wB_n$, is an infinite quotient of $vB_n$.
Similar to the pure braid group, the virtual and welded braid groups have ``pure" subgroups, denoted $PvB_n$ and $PwB_n$ respectively, which fix the strands of the braids pointwise. 
Inside both the virtual and welded braid groups we find totally symmetric sets.
One particularly useful type of totally symmetric set in $wB_n$ is denoted by $A_i$ in the theorems below.
Using the totally symmetric sets, $A_i$, we proved classification theorems on the size of finite images of homomorphisms for both the virtual and welded braid groups.
First we state the classification theorem for the welded braid group, $wB_n$.
We hope that this is a first step in classifying non-cyclic homomorphisms $wB_n \to G$.

\begin{thmx}\label{thm:wBn}
Let  $n> 5$, and let $\phi:wB_n\to G$ be a group homomorphism to a finite group, $G$.
One of the following must be true:
\begin{enumerate}
\item $\phi$ is abelian.
\item $\phi$ restricted to $PwB_n$ is cyclic.
\item $\abs{\phi(wB_n)}\geq 2^{n-2}(n-1)!$
\item For all $i$ and $j$, $\phi$ maps each $A_i$ to a single element with $\phi(A_i)^2\neq\phi(A_j)^2$, and 
\[\abs{\phi(wB_n)} \geq  \left( \left\lfloor\frac{n}{2}\right\rfloor +1\right)(3^{\lfloor\frac{n}{2}\rfloor - 1})\left\lceil \frac{n}{2} \right\rceil!.\]
\end{enumerate}
\end{thmx}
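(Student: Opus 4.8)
The plan is to establish the theorem as a dichotomy: assuming that conclusion (1) fails (so $\phi$ is non-abelian) and that conclusion (2) fails (so $\phi|_{PwB_n}$ is non-cyclic), I will deduce that (3) or (4) must hold. The engine throughout is the defining property of totally symmetric sets: for a totally symmetric set $S$, the image $\phi(S)$ has size $1$ or $\abs{S}$, and in the latter case $\phi(S)$ is again totally symmetric. Alongside the welded-specific sets $A_1,\dots,A_n$ --- the pairwise-commuting, mutually-conjugate pure generators associated with each strand, each of size $n-1$ --- I will use the classical totally symmetric sets inherited from the braid subgroup $B_n\le wB_n$, together with the quantitative totally-symmetric-set bounds in the spirit of Kordek--Margalit \cite{KM} and Chudnovsky--Kordek--Li--Partin \cite{CKLP}: a non-collapsed totally symmetric set of size $m$ forces the ambient finite image to have order at least $2^{m-1}m!$, since its image generates a large abelian subgroup on which the permutation realizers act faithfully.

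The first step is to record the ``engine lemma'' in the form we need: if $\phi$ does not collapse some $A_i$, then $\abs{\phi(wB_n)}\ge 2^{n-2}(n-1)!$, which is exactly conclusion (3). Indeed $\phi(A_i)$ is then a totally symmetric set of $n-1$ distinct, pairwise-commuting elements of common order $\ge 2$; the abelian subgroup they generate has order at least $2^{n-2}$ (the only relation that can be lost is a single product relation), and the elements realizing the symmetric group of permutations of $\phi(A_i)$ lie outside the centralizer of that abelian subgroup and so contribute an independent factor of $(n-1)!$.

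It remains to treat the case that $\phi$ collapses every $A_i$. Write $g_i := \phi(A_i)$, so that $\langle g_1,\dots,g_n\rangle=\phi(PwB_n)$ is non-cyclic. The welded relations linking the $A_i$, combined with the hypothesis that all of them collapse, determine the mutual positions of the $g_i$; using the commutation relations among the pure generators one finds which pairs $g_i,g_j$ are forced to commute. If $g_i^2\ne g_j^2$ for all $i\ne j$, we are precisely in conclusion (4), and the bound there is obtained by reduction to Theorem \ref{thm:extendedbound}: one first checks that $\phi|_{B_n}$ cannot be cyclic --- otherwise the fact that the full twist on strands $i$ and $j$, a generator of $PB_n$, is in $PwB_n$ a product of $\chi_{ij}\in A_j$ and $\chi_{ji}\in A_i$ would force the product $g_ig_j$ to equal $\phi(\sigma_i)^2$ independently of the strand $j$, hence force all the $g_i$ to coincide, contradicting $g_i^2\ne g_j^2$ --- and then one applies Theorem \ref{thm:extendedbound} to $\phi|_{B_n}$, whose image sits inside $\phi(wB_n)$; alternatively one transports the counting argument of Theorem \ref{thm:extendedbound} directly to the symmetric family $\{g_1,\dots,g_n\}$. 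If instead $g_i^2=g_j^2$ for some $i\ne j$, this is where the paper's ``multiple totally symmetric set'' technique enters: from the collapsed images one manufactures a fresh totally symmetric set --- for instance the $\tau$-orbit of an element such as $g_ig_j^{-1}$, which squares to the identity once $g_i^2=g_j^2$ and the relevant $g$'s commute --- shows that this new set cannot also collapse (the only way for it to collapse is for all the $g_i$ to be equal, i.e. for $\phi|_{PwB_n}$ to be cyclic, which is excluded), and feeds it into the engine lemma of the previous paragraph to reach conclusion (3).

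I expect the main obstacle to be twofold. First, one must verify that the $A_i$ are genuinely totally symmetric in $wB_n$; this is a hands-on computation with the welded relations ($\tau_i^2=1$, the mixed relations between the $\sigma$'s and $\tau$'s, and commutation of distant generators), exhibiting for each transposition of two elements of a fixed $A_i$ an explicit conjugator fixing the remaining elements. Second, and more delicate, is the degenerate branch where $g_i^2=g_j^2$ for some $i\ne j$: the argument hinges on showing that every auxiliary symmetric set constructed there is non-collapsed unless one of the excluded conclusions (1) or (2) already holds, and it is exactly the bookkeeping of tracking several totally symmetric sets and their products at once --- the novelty of this paper --- that makes this manageable. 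This degenerate branch is the heart of the proof and the step that most essentially uses ``not (1)'' and ``not (2).''
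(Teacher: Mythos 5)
Your first two branches are sound and essentially follow the paper: if some $A_i$ splits, Proposition \ref{Prop:SizeOfImage} gives conclusion (3) exactly as in the paper's Case 1; and in the branch where every $A_i$ collapses and the squares $g_i^2$ are pairwise distinct, reducing to Theorem \ref{thm:extendedbound} applied to $\phi|_{B_n}$ is legitimate (non-cyclicity of $\phi|_{B_n}$ follows even more directly from Corollary \ref{Cor:wBnAbelian} -- $\phi$ non-abelian forces $\phi|_{B_n}$ non-abelian -- than from your full-twist computation, though that computation also works), and this matches conclusion (4) as printed. Be aware, though, that in this distinct-squares branch the paper proves something stronger: using the rows of the left/right grids, which satisfy only the conjugation condition, it upgrades ``one pair of distinct squares'' to ``all $g_i$ and all $g_i^2$ distinct,'' and then the Hot Air Balloon Lemma (Lemma \ref{Lem:HotAirBalloon}) makes $\{g_i^2\}$ a totally symmetric set of size $n-1$ in the image, yielding $2^{n-2}(n-1)!$ there.

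The genuine gap is your last branch: every $A_i$ collapses and $g_i^2=g_j^2$ for some $i\neq j$. Since conjugation by the elements $\phi(\tau_k)$ permutes the indexed family $\{g_1,\dots,g_n\}$ (this is shown inside the proof of Lemma \ref{Lem:HotAirBalloon}), equality of one pair of squares propagates to all pairs, so this branch is exactly ``all squares equal'' -- the paper's Case 3 -- and there conclusion (4) as stated is unavailable, so your plan must deliver (3). It does not. The full $\tau$-orbit of $g_ig_j^{-1}$ is the family $\{g_kg_l^{-1}\}_{k\neq l}$, which is not totally symmetric (conjugation only realizes the $\Sigma_n$-action on index pairs, not arbitrary permutations of the set), and any honest totally symmetric subfamily, e.g.\ $\{g_kg_n^{-1}\}_{k<n}$ or $\{g_1g_2^{-1},g_3g_4^{-1},\dots\}$, has size governed by how many of the $g_i$ are distinct, about which you know nothing here. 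Your claim that collapse of the new set forces all $g_i$ equal, i.e.\ $\phi|_{PwB_n}$ cyclic, is false: if $g_1=\dots=g_{n-1}\neq g_n$ then $\{g_kg_n^{-1}\}_{k<n}$ is a single nontrivial element while $\phi(PwB_n)=\langle g_1,g_n\rangle$ can be non-cyclic (a Klein four group, say). Even a non-collapsed set of size $m<n-1$ only yields $2^{m-1}m!$, far short of $2^{n-2}(n-1)!$; indeed the paper does not claim (3) in this situation at all. Its Case 3 instead notes that $\phi$ non-abelian makes $\phi|_{B_n}$ non-cyclic and applies Theorem \ref{thm:extendedbound} to get the weaker bound $\left(\lfloor n/2\rfloor+1\right)3^{\lfloor n/2\rfloor-1}\lceil n/2\rceil!$, which is the bound appearing in item (4). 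In short, your two collapsed sub-branches have their treatments swapped relative to what actually works: the $B_n$-reduction belongs in the equal-squares branch, while the ``manufacture a new totally symmetric set'' argument (the Hot Air Balloon Lemma, fed by the grid argument that forces all $g_i$ distinct) belongs in the distinct-squares branch; as written, the equal-squares case of your proof is not closed.
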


For the case of the virtual braid group, Bellingeri and Paris classified all homomorphisms from $vB_n \to \Sigma_k$ where $n \geq 5$, $k \geq 2$ and $n \geq k$ \cite{BP}.
However, similar to the story for $B_n$, not much is known about non-cyclic homomorphisms $vB_n\to \Sigma_k$ when $k>n$. 
Theorem \ref{Thm:vBn} is a step in the right direction towards this classification as it provides a necessary condition for the existence of a non-abelian homomorphism $vB_n \to G$.

\begin{thmx}\label{Thm:vBn}
Let $n>5$, and let $\phi \from vB_n \to G$ be a group homomorphism to a finite group, $G$. One of the following must be true:
\begin{enumerate}
    \item $\phi$ is abelian.
    \item $\phi$ factors through $wB_n$, and either
    \begin{enumerate}
        \item  $\phi$ restricted to $PwB_n$ is cyclic.
        \item $\abs{\phi(vB_n)}\geq 2^{n-2}(n-1)!$
\item For all $i$ and $j$, $\phi$ does not split $A_i$, $\phi(A_i)^2\neq\phi(A_j)^2$, and 
\[\abs{\phi(vB_n)} \geq  \left( \left\lfloor\frac{n}{2}\right\rfloor +1\right)(3^{\lfloor\frac{n}{2}\rfloor - 1})\left\lceil \frac{n}{2} \right\rceil!.\]
    \end{enumerate}
    \item $\phi$ does not factor through $wB_n$ and 
\[\abs{\phi(vB_n)} \geq  \left( \left\lfloor\frac{n}{2}\right\rfloor +1\right)(3^{\lfloor\frac{n}{2}\rfloor - 1})\left\lceil \frac{n}{2} \right\rceil!.\]
\end{enumerate}
\end{thmx}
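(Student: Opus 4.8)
The plan is to reduce everything to Theorems~\ref{thm:extendedbound} and~\ref{thm:wBn}. Write the standard generators of $vB_n$ as the classical generators $\sigma_1,\dots,\sigma_{n-1}$ together with the virtual generators $\rho_1,\dots,\rho_{n-1}$, and let $B_n=\langle\sigma_1,\dots,\sigma_{n-1}\rangle\le vB_n$ be the standard copy of the braid group. If $\phi$ is abelian we are in case~(1), so from now on assume $\phi$ is non-abelian. We split into two cases according to whether or not $\phi$ factors through the natural surjection $vB_n\twoheadrightarrow wB_n$.

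Suppose first that $\phi$ factors through $wB_n$, so that $\phi$ descends to $\bar\phi\from wB_n\to G$ with $\bar\phi(wB_n)=\phi(vB_n)$. Applying Theorem~\ref{thm:wBn} to $\bar\phi$ and using that $\bar\phi$ is non-abelian, we land in case (2), (3), or (4) of that theorem, which are precisely cases 2(a), 2(b), and 2(c) here. The only point to check is that the copy of each totally symmetric set $A_i$ in $vB_n$ maps onto the corresponding $A_i$ in $wB_n$, so that ``$\phi$ does not split $A_i$'' in $vB_n$ is equivalent to ``$\bar\phi(A_i)$ is a single element'' in $wB_n$.

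Now suppose $\phi$ does not factor through $wB_n$. If $\phi|_{B_n}$ is non-cyclic, then Theorem~\ref{thm:extendedbound} gives $\abs{\phi(vB_n)}\ge\abs{\phi(B_n)}\ge\left(\left\lfloor\frac n2\right\rfloor+1\right)(3^{\lfloor n/2\rfloor-1})\left\lceil\frac n2\right\rceil!$, which is case~(3). So assume $\phi|_{B_n}$ is cyclic. Because the $\sigma_i$ are mutually conjugate in $B_n$ and have coinciding images in the abelian group $\phi(B_n)$, we get $\phi(\sigma_i)=s$ for a single element $s$. Using the far-commutation relations $\sigma_i\rho_j=\rho_j\sigma_i$ for $\abs{i-j}\ge 2$ — which are available for every $j$ because $n>5$ — the element $s$ commutes with every $\phi(\rho_j)$; as it also commutes with every $\phi(\sigma_k)=s$, the element $s$ is central in $\phi(vB_n)$. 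Evaluating the defining relation of $wB_n$ under $\phi$ and cancelling the central factor $s^2$ shows that this relation holds at index $i$ if and only if $\phi(\rho_i)=\phi(\rho_{i+1})$; since $\phi$ does not factor through $wB_n$, there is an $i$ with $\phi(\rho_i)\ne\phi(\rho_{i+1})$. But $\langle\rho_1,\dots,\rho_{n-1}\rangle\cong\Sigma_n$ inside $vB_n$, so its image is a quotient of $\Sigma_n$; for $n>5$ the only such quotients are $1$, $\Z/2$, and $\Sigma_n$, and the first two would force the images of all the transpositions $\rho_i$ to agree, a contradiction. Hence the image of $\Sigma_n$ is all of $\Sigma_n$, so $\abs{\phi(vB_n)}\ge n!\ge\left(\left\lfloor\frac n2\right\rfloor+1\right)(3^{\lfloor n/2\rfloor-1})\left\lceil\frac n2\right\rceil!$ — a routine inequality for $n>5$ — and we are in case~(3) once more.

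The bulk of this argument is bookkeeping and citing Theorems~\ref{thm:extendedbound} and~\ref{thm:wBn}; the genuinely new observation, and the step I expect to be the crux, is that a cyclic restriction to $B_n$ forces $\phi(\sigma_i)$ to be central, after which the failure of the welded relation immediately pins down the symmetric-group part of the image and hence the lower bound. The part most likely to need care is confirming that this dichotomy is exhaustive — in particular that an abelian $\phi$, or a $\phi$ factoring through $wB_n$, really does land in one of the cases listed, and that the translation of the $A_i$ hypotheses between $vB_n$ and $wB_n$ is faithful.
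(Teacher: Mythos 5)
Your proof is correct, and in the case where $\phi$ factors through $wB_n$ it is the same as the paper's: descend to $\bar\phi$ on $wB_n$, note it is non-abelian, and quote Theorem \ref{thm:wBn}, with the easy observation that the sets $A_i$ correspond under the quotient map. Where you genuinely diverge is the non-factoring case. The paper handles it in one line: by Lemma \ref{Lem:TauCommute}, a non-abelian $\phi$ is claimed to have non-abelian, hence non-cyclic, restriction to $B_n$, after which Theorem \ref{thm:extendedbound} applies. You instead split on whether $\phi$ restricted to $B_n$ is cyclic and, in the cyclic subcase, argue directly: all $\phi(\sigma_i)$ equal a single central element $s$, so the OC relation survives in the image exactly when $\phi(\tau_i)=\phi(\tau_{i+1})$; since $\phi$ does not factor through $wB_n$ some pair differs, and because the only quotients of $\Sigma_n$ for $n>5$ are $1$, $\Z/2$ and $\Sigma_n$, the image contains a full copy of $\Sigma_n$, giving $\abs{\phi(vB_n)}\geq n!$, which dominates the stated bound (indeed $n!/\lceil n/2\rceil!\geq(\lfloor n/2\rfloor+1)^{\lfloor n/2\rfloor}\geq(\lfloor n/2\rfloor+1)3^{\lfloor n/2\rfloor-1}$). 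This costs you an extra page but buys real robustness: your cyclic subcase is not vacuous --- the homomorphism $vB_n\to\Sigma_n$ sending $\sigma_i\mapsto 1$ and $\tau_i\mapsto\tau_i$ is non-abelian, does not factor through $wB_n$ (the OC relation would force $\tau_i=\tau_{i+1}$), yet restricts to a cyclic map on $B_n$, so it is reached by your direct $\Sigma_n$ argument but not by an appeal to Theorem \ref{thm:extendedbound} alone; the paper's shortcut leans on the $B_n$-direction of Lemma \ref{Lem:TauCommute}, which this example shows must be treated with care. Two cosmetic remarks: you write $\rho_i$ for the paper's $\tau_i$, and you should state the routine inequality above explicitly rather than only asserting it.
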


\noindent \textit{Outline of the paper.}
Section \ref{Sec:TotallySymmetricSets} provides the background information about totally symmetric sets. 
Section \ref{Sec:ApplicationtoBn} applies these ideas to $B_n$ and gives a proof of Theorem \ref{thm:extendedbound}. 
Section \ref{Sec:TTSinVBnandwBn} provides the background about the virtual and welded braid groups, and introduces some totally symmetric sets inside of these groups. 
Section \ref{Sec:ApplicationtoWBn} contains the proofs of Theorems \ref{thm:wBn} and \ref{Thm:vBn}.\\

\noindent \textbf{Acknowledgements.}
The authors would like to thank Dror Bar-Natan and Dan Margalit for helpful conversations.
The authors would also like to thank Thomas Ng for helpful conversations which led improvement of the bound in Theorem \ref{thm:extendedbound}.
The first author was partially supported by NSERC grant RGPIN-2018-04350.
The second author was supported by the National Science Foundation under Grant No. DMS-1928930 while participating in a program hosted by the Mathematical Sciences Research Institute in Berkeley, California, during the Fall 2020 semester.
Both authors would like to thank the Georgia Institute of Technology for their hospitality during the Tech Topology Conference in December of 2019, where this project initially began.
 
 \section{Totally symmetric sets }\label{Sec:TotallySymmetricSets}
 
Kordek and Margalit introduced the theory of totally symmetric sets to give a complete classification of homomorphisms $B_n'\to B_n'$ for $n\geq 7$, where $B_n'$ is  the commutator subgroup of $B_n$ \cite{KM}. Totally symmetric sets are useful because they behave predictably under homomorphisms and group closures, as will be described in detail below.
 
 \begin{defn}\label{Def:TotallySymmetric}
 A \textit{totally symmetric set} of a group $G$ is a nonempty finite subset $\{ g_1, \ldots, g_n \}$ of $G$ which satisfies the following two relations:
 \begin{enumerate}
 \item The elements $g_i$ pairwise commute \hfill (Commutativity Condition)
 \item For every permutation $\sigma$, there exists an element $h_\sigma\in G$\\
 so that for each $i$, $h_\sigma g_{i}h_\sigma^{-1}=g_{\sigma(i)}$ \hfill (Conjugation Condition)
 \end{enumerate}
 \end{defn}
 
 \begin{rmk}
 While in our context we consider only finite totally symmetric sets, we note that totally symmetric sets need not be finite as seen in \cite{KM}.
 \end{rmk}
 
 The conjugation condition states that each permutation of $\{ g_1, \ldots, g_n \}$ can be achieved via the conjugation of an element in $G$. An important fact about totally symmetric sets is that if $f \from G \to H$ is a homomorphism and $S$ is a totally symmetric set of $G$, then $f(S)$ is a totally symmetric set of $H$.


A standard example of a group which contains totally symmetric sets is the braid group \cite{KM}. 
We begin by defining the braid group.

\begin{defn}\label{Def:BraidGroup}
The \textit{braid group} on $n$ strands, $B_n$, is the group generated by the half-twists $\sigma_1, \ldots, \sigma_{n-1}$ with the following two relations
\begin{enumerate}
    \item $\sigma_i \sigma_j = \sigma_j \sigma_i$ if $\abs{i-j}\geq2$ \hfill (Far Commutativity)
    \item $\sigma_i \sigma_{i+1} \sigma_i = \sigma_{i+1} \sigma_i \sigma_{i+1}$ if $1 \leq i \leq n-2$ \hfill (Braid Relation)
\end{enumerate}
\end{defn}
In the braid group, the subsets $S_{odd}=\{ \sigma_{2i-1} \}_{i=1}^{\lceil n/2 \rceil}$ and $S_{even}=\{ \sigma_{2i} \}_{i=1}^{\lfloor n/2 \rfloor }$ are both totally symmetric sets \cite{CKLP, KM}. 

 \subsection{The image of a totally symmetric set}\label{SubSec:ImageofTotallySymmetric}
 
 We will now discuss some of the properties which make totally symmetric sets so useful. 
 The following lemma, due to Kordek and Margalit \cite{KM}, is the crux of how totally symmetric sets are used throughout this paper. 

\begin{lem}[Kordek-Margalit]\label{lem:SplitsOrNot}  
Let  $f \from G \to H$ be a group homomorphism. Suppose that $S \subseteq G$ is a totally symmetric set of size $k$. Then $\abs{f(S)}$ is equal to either $1$ or $k$.
 \end{lem}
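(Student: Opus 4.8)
The plan is to show that the image $f(S)$ inherits the totally symmetric set structure, and then argue that a totally symmetric set can only fail to have its "expected" size if it has collapsed entirely to a single element. First I would recall that if $S = \{g_1,\dots,g_k\}$ is totally symmetric in $G$ and $f\from G\to H$ is a homomorphism, then $f(S) = \{f(g_1),\dots,f(g_k)\}$ satisfies both the commutativity condition (commutators map to commutators, so the $f(g_i)$ pairwise commute) and the conjugation condition (if $h_\sigma g_i h_\sigma^{-1} = g_{\sigma(i)}$ then $f(h_\sigma) f(g_i) f(h_\sigma)^{-1} = f(g_{\sigma(i)})$). This is exactly the fact stated in the discussion immediately following Definition \ref{Def:TotallySymmetric}, so it can be quoted rather than reproved.

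The heart of the argument is then a statement purely about totally symmetric sets: if $T = \{t_1,\dots,t_m\}$ is a totally symmetric set (here $T = f(S)$, where a priori $m \le k$ because distinct $g_i$ may have the same image), then in fact either $m = 1$ or the $t_j$ are forced to be genuinely distinct in a way that pins down $|f(S)| = k$. The key step is the following: suppose $f(g_a) = f(g_b)$ for some $a \neq b$; I claim then that \emph{all} the $f(g_i)$ coincide. Indeed, pick any index $i$; by the conjugation condition there is a permutation $\sigma$ (a suitable product of transpositions) with $\sigma(a) = i$ and $\sigma(b) = b$, and a witnessing element $h_\sigma \in H$ with $h_\sigma f(g_j) h_\sigma^{-1} = f(g_{\sigma(j)})$ for all $j$. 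Applying conjugation by $h_\sigma$ to the equation $f(g_a) = f(g_b)$ gives $f(g_{\sigma(a)}) = f(g_{\sigma(b)})$, i.e. $f(g_i) = f(g_b)$. Since $i$ was arbitrary, every $f(g_i)$ equals $f(g_b)$, so $|f(S)| = 1$. Contrapositively, if $|f(S)| > 1$ then no two of the $f(g_i)$ coincide, hence $|f(S)| = k$.

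One small technical point to be careful about: the permutation $\sigma$ above should be chosen inside the symmetric group on $\{1,\dots,k\}$ so that the conjugation condition for $S$ (which quantifies over \emph{all} permutations) applies directly; then one transports the resulting identity through $f$. When $k = 2$ the only nontrivial permutation is the transposition $(a\,b)$ itself, which sends $a \mapsto b$, so the argument still works — one simply concludes $f(g_b) = f(g_a)$, which was the hypothesis, and separately handles $i = b$ trivially; so the case $k=2$ is already covered by the general argument since there is nothing to prove once two elements agree. The main obstacle, such as it is, is purely organizational: making sure the "there exists a permutation with $\sigma(a) = i$, $\sigma(b) = b$" claim is stated cleanly (it just requires $i \neq b$, and the case $i = b$ is vacuous), and being explicit that the witnessing element used is $f(h_\sigma)$, not some unrelated element of $H$. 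No deep input is needed beyond Definition \ref{Def:TotallySymmetric} and the homomorphism-stability remark.
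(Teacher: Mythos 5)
Your argument is correct and is essentially the paper's own proof (which, like yours, only uses the conjugation condition): assuming $f(g_a)=f(g_b)$, you conjugate by the image of a witnessing element that fixes one index and moves the other to an arbitrary index $i$, thereby propagating the coincidence to all of $f(S)$, exactly as Kordek--Margalit do by conjugating $g_1g_2^{-1}$ with an $h$ fixing $g_1$ and sending $g_2$ to $g_i$. The only difference is cosmetic (you conjugate the equation rather than the element with trivial image), so there is nothing to add.
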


In this paper, we will often consider whether $\abs{\phi(S)} = \abs{S}$ or not. 
We say that $\phi$ \textit{splits} $S$ if $|\phi(S)|=|S|$.
 
 \begin{rmk}
 By Lemma \ref{lem:SplitsOrNot}, if $|S|>1$, then $\phi$ splits $S$ if and only if $|\phi(S)|>1$.
 \end{rmk}
 \begin{rmk}\label{rmk:OnlyConjugate}
The proof by Kordek and Margalit of Lemma \ref{lem:SplitsOrNot} only makes use of the conjugation condition from the definition of a totally symmetric set. 
 Therefore, Lemma \ref{lem:SplitsOrNot} holds for all sets which satisfy the conjugation condition from the definition of a totally symmetric set.
 For a set that only satisfies the conjugation condition, it makes sense to say whether $\phi$ splits the set or not.
 \end{rmk}
 
 \subsection{Totally symmetric sets with finite order elements}\label{Subsec:TotallySymmetricFiniteOrder}
 
 Let $S=\{s_i\}_{i=1}^n$ be a totally symmetric subset of a group $G$. Since all elements of $S$ are conjugate, every element of $S$ has the same order. 
 Therefore, if one element of $S$ has finite order $k \in \mathbb{N}$, every element of $S$ has order $k$. 
 In fact, if there exists $p$ so that $s_i^p=s_j^p$ for any $i,j$, then $s_i^p=s_j^p$ for \emph{all} $i,j$.  
Since the elements of a totally symmetric set commute, if a totally symmetric set consists of a finite number of elements each of finite order, then $\langle S \rangle$ is a finite group. 
The following lemma gives a lower bound of the size of this group.
A first bound was obtained by Chen, Kordek and Margalit (a proof of which can be found in \cite{CKLP}), but we will use an improvement of this bound by Caplinger and Kordek \cite{CK}(Lemma 6).
 
\begin{lem}[Caplinger-Kordek]\label{Lemma:sizeoftot}
Let $S$ be a totally symmetric set of size $k$ in a group, $G$.
Suppose further that each element of $S$ has finite order and let $p$ be the minimal integer such that $s_i^p = s_j^p$ for all $s_i, s_j \in S$. 
Then $\langle S \rangle $ is a finite group and $|\langle S\rangle|\geq p^{k-1}$.
\end{lem}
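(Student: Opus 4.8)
The plan is to exploit the commutativity of $S=\{s_1,\dots,s_k\}$ to realize $\langle S\rangle$ as a quotient of a product of cyclic groups, and then use the conjugation condition to force the ``size $p$'' behaviour uniformly across coordinates. Since the $s_i$ pairwise commute and each has finite order, $\langle S\rangle$ is a finite abelian group, so there is a surjection $\pi\from \Z^k \twoheadrightarrow \langle S\rangle$ sending the $i$-th standard basis vector $e_i$ to $s_i$. Let $\Lambda = \ker \pi$, a finite-index subgroup of $\Z^k$, so $|\langle S\rangle| = [\Z^k : \Lambda]$. I would prove the bound $[\Z^k:\Lambda]\geq p^{k-1}$ by analyzing the structure of $\Lambda$.

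First I would pin down $p$ in lattice terms: by Lemma~\ref{lem:SplitsOrNot} (or rather the elementary argument in Subsection~\ref{Subsec:TotallySymmetricFiniteOrder}), $s_i^p = s_j^p$ for \emph{all} $i,j$ once it holds for one pair, so each $e_i - e_j$ with the scaling $p(e_i-e_j)$ lies in $\Lambda$; minimality of $p$ says $p$ is the smallest positive integer with $p(e_1-e_2)\in\Lambda$ (equivalently $\overline{s_1 s_2^{-1}}$ has order $p$ in $\langle S\rangle$). The key structural claim is: \emph{the image of $\Lambda$ under any coordinate-forgetting projection, or more usefully the ``difference sublattice'' of $\Lambda$, is controlled by $p$.} Concretely, consider the sublattice $D = \{x\in\Z^k : \sum x_i = 0\}\cong \Z^{k-1}$ of ``differences.'' I claim $\Lambda \cap D \subseteq pD$, i.e. every relation among the $s_i$ with exponent-sum zero has all coordinates divisible by $p$. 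Granting this, $[\Z^k:\Lambda] \geq [D + \Z e_1 : (\Lambda\cap D) + \Z e_1]\cdot(\text{stuff})$; more cleanly, $[\Z^k : \Lambda] \geq [D : \Lambda\cap D] \geq [D : pD] = p^{k-1}$, since $\Lambda \cap D$ has index at least that of $\Lambda$ restricted appropriately — I would make this last inequality precise by noting $\Z^k/\Lambda$ surjects onto $D/(\Lambda\cap D)$ is false in general, so instead I'd argue $|\langle S\rangle| \geq |\langle s_1 s_k^{-1},\, s_2 s_k^{-1},\,\dots,\, s_{k-1}s_k^{-1}\rangle|$ and show this subgroup of $\langle S\rangle$, being generated by $k-1$ pairwise-commuting elements, is isomorphic to a quotient of $(\Z/p)^{k-1}$ with no further collapsing.

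The heart of the matter — and the step I expect to be the main obstacle — is proving that the only ``$p$-torsion-type'' relations among the differences are the obvious ones, i.e. that $d_i := s_i s_k^{-1}$ for $i=1,\dots,k-1$ generate a subgroup of order exactly $p^{k-1}$ rather than something smaller. Here is where the conjugation condition does the real work: given any putative relation $\prod d_i^{a_i} = 1$ with $0\le a_i < p$, I would apply $h_\sigma$-conjugation for a well-chosen permutation $\sigma$ to produce a second relation with the $a_i$ permuted (and $s_k$ possibly moved, which one handles by composing two conjugations to fix the ``base'' index), then subtract to deduce $a_i = a_j$ for all $i,j$; a single relation $\prod d_i^{a} = 1$ then forces $(s_1\cdots s_{k-1})^a = s_k^{(k-1)a}$, and combining with $s_i^p = s_j^p$ one chases $a$ down to a multiple of $p$. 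This is essentially the argument of Caplinger--Kordek (\cite{CK}, Lemma~6) and of the Chen--Kordek--Margalit bound recorded in \cite{CKLP}; I would follow their line, being careful that every conjugating element exists by the Conjugation Condition and that, by Remark~\ref{rmk:OnlyConjugate}, only that condition is needed. The finiteness of $\langle S\rangle$ is immediate once one knows each generator has finite order and they commute, so that part requires no separate argument.
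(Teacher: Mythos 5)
Your reduction to a finite abelian quotient $\langle S\rangle \cong \Z^k/\Lambda$ with $\Lambda$ invariant under coordinate permutations, and the identification of $p$ as the least positive integer with $p(e_1-e_2)\in\Lambda$, are fine (note the paper does not prove this lemma itself; it cites Caplinger--Kordek, so the comparison is with that argument). The gap is exactly in the step you flag as the heart of the matter: the claim that the difference subgroup $\langle s_1s_k^{-1},\dots,s_{k-1}s_k^{-1}\rangle$ has order $p^{k-1}$ (equivalently $\Lambda\cap D\subseteq pD$, ``no further collapsing''). This is false, and the strategy of bounding $|\langle S\rangle|$ below by the order of the difference subgroup cannot work in general, because that subgroup can be as small as $p^{k-1}/\gcd(k,p)$. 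Concretely, take $k=4$, $\Lambda=\{x\in\Z^4: x_1\equiv x_2\equiv x_3\equiv x_4 \bmod 2,\ \textstyle\sum x_i\equiv 0\bmod 8\}$, $A=\Z^4/\Lambda$, and $G=A\rtimes\Sigma_4$ with $\Sigma_4$ permuting coordinates; the images $s_1,\dots,s_4$ of the standard basis vectors form a totally symmetric set of size $4$ with $p=2$, yet $(1,1,1,-3)\in\Lambda$, i.e.\ $s_1s_2s_3=s_4^{3}$, is an exponent-sum-zero relation whose exponents are odd, and the difference subgroup is $(\Z/2)^2$, of order $4<p^{k-1}=8$ (the lemma still holds there, since $|A|=32$). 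Your derivation also breaks internally: subtracting a relation $\prod d_i^{a_i}=1$ from its image under a permutation only produces another relation, not the equality $a_i=a_j$; and in the example the offending relation has all $a_i=1$, so even granting that step, the final ``chase $a$ down to a multiple of $p$'' fails.

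What the conjugation condition actually yields is weaker but suffices, provided you apply it to all of $\Lambda$ rather than to $\Lambda\cap D$: if $(a_1,\dots,a_k)\in\Lambda$, subtracting its image under the transposition $(i\,j)$ gives $(a_i-a_j)(e_i-e_j)\in\Lambda$, hence $p\mid a_i-a_j$. So in any relation the exponents are all congruent \emph{to one another} mod $p$ --- not equal, and not individually divisible by $p$. Therefore $\Lambda\subseteq\Lambda_p:=\{x\in\Z^k: x_i\equiv x_j\bmod p\ \text{for all } i,j\}$, and $\langle S\rangle=\Z^k/\Lambda$ surjects onto $\Z^k/\Lambda_p\cong(\Z/p)^{k-1}$, which gives $|\langle S\rangle|\geq p^{k-1}$; in the example above this surjection exists even though $(\Z/p)^{k-1}$ does not embed via differences. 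Replacing your embedded difference subgroup with this quotient repairs the proof; the finiteness of $\langle S\rangle$, as you say, is immediate from commutativity and the finite order of the generators.
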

 
 Combining Lemma \ref{Lemma:sizeoftot} with the work of Chudnovsky, Kordek, Li, and Partin, one obtains a lower bound on the size of a group based on the size of a totally symmetric subset  consisting of finite order elements \cite{CKLP}.

\begin{prop}[Chudnovsky-Kordek-Li-Partin, Caplinger-Kordek]\label{Prop:SizeOfImage}
 Let $S$ be a totally symmetric set of size $k$ in a group, $G$. 
 If the elements of $S$ have finite order and $p$ is the minimal integer such that $s_i^p = s_j^p$ for all $s_i, s_j \in S$, then $|G|\geq p^{k-1}k!$.
 \end{prop}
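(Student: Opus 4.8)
The plan is to build a single subgroup of $G$ that simultaneously contains the finite group $\langle S\rangle$ and surjects onto a copy of $\Sigma_k$, and then exploit the resulting extension structure to multiply the two orders. First I would invoke Lemma \ref{Lemma:sizeoftot} to record that, since the elements of $S$ have finite order and pairwise commute, $\langle S\rangle$ is a finite subgroup of $G$ with $|\langle S\rangle|\ge p^{k-1}$.

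Next, writing $S=\{s_1,\dots,s_k\}$, for each permutation $\sigma\in\Sigma_k$ of the index set $\{1,\dots,k\}$ I would fix an element $h_\sigma\in G$ provided by the conjugation condition, so that $h_\sigma s_i h_\sigma^{-1}=s_{\sigma(i)}$ for every $i$. Let $H=\langle\, S\cup\{h_\sigma:\sigma\in\Sigma_k\}\,\rangle\le G$. I claim $\langle S\rangle\trianglelefteq H$: it is enough to check that each generator of $H$ conjugates $\langle S\rangle$ into itself, and indeed conjugation by any $s_j$ preserves $\langle S\rangle$ since $s_j\in\langle S\rangle$, while conjugation by $h_\sigma$ merely permutes the generating set $S$ of $\langle S\rangle$.

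Then I would consider the conjugation action of $H$ on the finite set $S$, i.e.\ the homomorphism $\rho\from H\to \mathrm{Sym}(S)\cong\Sigma_k$. This map is surjective because $\rho(h_\sigma)=\sigma$ for every $\sigma$. Its kernel contains $\langle S\rangle$: by the commutativity condition any element of $\langle S\rangle$ commutes with each $s_i$ and so acts trivially on $S$ by conjugation. Hence $|H|=|\ker\rho|\cdot|\Sigma_k|\ge|\langle S\rangle|\cdot k!\ge p^{k-1}\,k!$, and since $H\le G$ this yields $|G|\ge p^{k-1}k!$, as claimed. (The degenerate case $k=1$ is trivially covered, the bound then reading $|G|\ge 1$.)

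As for the main obstacle: there is essentially no computation here, and the only point that needs care is recognizing that both defining properties of a totally symmetric set are used, and used in complementary roles — the conjugation condition makes $\rho$ surjective and $\langle S\rangle$ normal in $H$, while the commutativity condition forces $\langle S\rangle$ into $\ker\rho$, so that the contributions $p^{k-1}$ and $k!$ combine multiplicatively rather than overlapping. Making this separation explicit is the heart of the argument.
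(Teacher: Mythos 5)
Your proof is correct and takes essentially the same route as the one the paper relies on (the argument of \cite{CKLP,CK}, reproduced inside the proof of Theorem \ref{thm:extendedbound}): the conjugation condition gives a surjection onto $\Sigma_k$, the commutativity condition places $\langle S\rangle$ in its kernel, and Lemma \ref{Lemma:sizeoftot} bounds that kernel below by $p^{k-1}$, so the two factors multiply. The only cosmetic difference is that you work inside the subgroup generated by $S$ and the chosen elements $h_\sigma$, whereas the paper uses the full setwise stabilizer $\Gamma$ of the totally symmetric set under conjugation.
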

 
 A useful restatement of Proposition \ref{Prop:SizeOfImage}  in terms of a group homomorphism is the following: Let $S$ be a totally symmetric set of a group $G$ and $\phi\from G \to H$ a group homomorphism to a finite group $H$. 
 If $\phi$ splits $S$, then $|\phi(G)|\geq p^{|S|-1}|S|!$, where $p$ is the minimal integer such that $s_i^p = s_j^p$ for all $s_i, s_j \in S$.

\section{Applications to the braid group}\label{Sec:ApplicationtoBn}

In this section, we utilize totally symmetric sets to determine a necessary condition for the existence of a non-cyclic homomorphism from the braid group into a finite group.
We begin with an overview of existing results, and then we discuss how to strengthen previous results.

\subsection{Precursory results }\label{Subsec:PreviousApplicationsBn}

Recall that two totally symmetric sets in $B_n$ are the sets $S_{odd}=\{ \sigma_{2i-1} \}_{i=1}^{\lceil n/2 \rceil}$ and $S_{even}=\{ \sigma_{2i} \}_{i=1}^{ \lfloor n/2 \rfloor  }$.
 Chudnovsky, Kordek, Li, and Partin used  these totally symmetric sets to determine a necessary condition for the existence of a non-cyclic homomorphism from the braid group into a group \cite{CKLP}.
 Recently, Caplinger and Kordek obtained a stronger necessary condition than the one found by Chudnovsky-Kordek-Li-Partin \cite{CK}.

 \begin{lem}[Caplinger-Kordek]\label{Lem:BoundonBn}
Let $G$ be a finite group and let $n \geq 5$. If the homomorphism $B_n \to G$ is non-cyclic, then
 \begin{equation}\label{Eq:BetterBoundingofImagesofBn}
   \abs{G} \geq 3^{\lfloor n/2 \rfloor -1} \left( \lfloor n/2 \rfloor \right) !.  
 \end{equation}
\end{lem}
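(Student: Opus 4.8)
The plan is to leverage Proposition \ref{Prop:SizeOfImage} (the Chudnovsky--Kordek--Li--Partin / Caplinger--Kordek bound) applied to one of the two totally symmetric sets $S_{odd}$ or $S_{even}$ inside $B_n$, and to show that a non-cyclic homomorphism $\phi \from B_n \to G$ must split at least one of them with the additional feature that the relevant exponent $p$ is at least $3$. First I would recall that $S_{odd}$ has size $\lceil n/2\rceil$ and $S_{even}$ has size $\lfloor n/2\rfloor$, so a priori applying Proposition \ref{Prop:SizeOfImage} to whichever set is split gives $|G|\geq p^{\,\lfloor n/2\rfloor - 1}(\lfloor n/2\rfloor)!$ where $p$ is the minimal integer with $\phi(\sigma_i)^p = \phi(\sigma_j)^p$ for the split set. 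Thus everything reduces to two claims: (i) $\phi$ splits at least one of $S_{odd}$, $S_{even}$; and (ii) when it does, the minimal such $p$ satisfies $p\geq 3$.

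For claim (i): suppose $\phi$ splits neither set. By Lemma \ref{lem:SplitsOrNot}, $\phi(S_{odd})$ and $\phi(S_{even})$ are each singletons, say $\phi(\sigma_i) = a$ for all odd $i$ and $\phi(\sigma_j) = b$ for all even $j$. But consecutive generators $\sigma_k, \sigma_{k+1}$ satisfy the braid relation $\sigma_k\sigma_{k+1}\sigma_k = \sigma_{k+1}\sigma_k\sigma_{k+1}$, which forces $aba = bab$ in $G$; together with the fact that (for $n\geq 4$) some odd and some even generator are adjacent to a common third generator, a short computation shows $a$ and $b$ must commute and in fact $a = b$, so $\phi$ is cyclic — contradiction. (This is the standard argument that non-cyclic forces a split; I would write it carefully using that $n\geq 5$ guarantees enough adjacencies.) For claim (ii): by Remark \ref{remk:AfterEqualatSamePower}/Proposition \ref{Prop:EqualatSamePower}, if $p = 1$ then $\phi$ collapses the split set to a point, contradicting that it is split; if $p = 2$ then $\phi(\sigma_i)^2 = \phi(\sigma_j)^2$ for all generators of that parity, and I would show — again using the braid relation linking the two parities and the resulting relation $aba = bab$ — that $a^2 = b^2$ propagates across all of $S_{odd}\cup S_{even}$, which then forces $\phi$ to factor in a way that makes it cyclic (this is exactly the $p=2$ obstruction that Caplinger--Kordek identify: $\sigma_i^2$ becoming central/identical kills non-cyclicity). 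Hence $p\geq 3$, and substituting into Proposition \ref{Prop:SizeOfImage} with $k = \lfloor n/2\rfloor$ (choosing the smaller of the two sets if both split, which only decreases the bound and is safe) yields $|G|\geq 3^{\lfloor n/2\rfloor - 1}(\lfloor n/2\rfloor)!$.

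The main obstacle I anticipate is claim (ii), specifically ruling out $p = 2$: one has to trace how the relation $aba = bab$ (and its consequences among the images of the $\sigma_i$) interacts with the hypothesis $a^2 = b^2$ to conclude the image is abelian. The braid relation alone gives $(ab)^3 = (ba)^3$ type identities and $a^{-1}ba = b a b^{-1}$, and combining these with $a^2 = b^2$ should collapse the group generated by $a, b$ to something abelian; getting this bookkeeping exactly right — and making sure it genuinely uses $n\geq 5$ rather than a smaller bound — is the delicate part. The splitting argument in claim (i) and the final substitution are routine by comparison, relying only on Lemma \ref{lem:SplitsOrNot} and Proposition \ref{Prop:SizeOfImage} as black boxes.
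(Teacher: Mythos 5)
Your skeleton---show that a non-cyclic $\phi$ splits one of $S_{odd}$, $S_{even}$ (your claim (i), which is essentially Lemma \ref{Lem:CyclicImage} and Corollary \ref{Cor:NoncyclicSplitsBoth}) and then apply Proposition \ref{Prop:SizeOfImage} with exponent $p\geq 3$---is the right frame, but your claim (ii) is false, and it is the load-bearing step. It is not true that $p=2$ forces $\phi$ to be cyclic: the canonical projection $B_n\to\Sigma_n$ is non-cyclic, sends every $\sigma_i$ to a transposition, and satisfies $\phi(\sigma_i)^2=\phi(\sigma_j)^2=1$ for all $i,j$, so $p=2$ there. Concretely, the algebra you propose cannot close up: $a=(1\,2)$ and $b=(2\,3)$ satisfy $aba=bab$ and $a^2=b^2$, yet $\langle a,b\rangle\cong\Sigma_3$ is non-abelian, so no bookkeeping with those two relations will collapse the image. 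Worse, $p=2$ can even occur with $d=\mathrm{ord}(\phi(\sigma_i))>2$ for non-cyclic quotients (e.g.\ $\sigma_i\mapsto((i\ i{+}1),x)\in\Sigma_n\times\Z/4$ with $x$ of order $4$), so ``non-cyclic $\Rightarrow p\geq 3$'' is simply not a theorem; as written, your argument would ``prove'' that $B_n\to\Sigma_n$ is cyclic.

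The actual route (Caplinger--Kordek \cite{CK}, and the reduction used in the proof of Theorem \ref{thm:extendedbound}) treats the small-$p$ situations as separate cases rather than contradictions. If $d=2$, then $\phi$ factors through $\Sigma_n$, and for $n\geq 5$ the only non-cyclic quotient of $\Sigma_n$ is $\Sigma_n$ itself, so $\abs{G}\geq n!\geq 3^{\lfloor n/2\rfloor-1}\left(\lfloor n/2\rfloor\right)!$ directly. If $d\geq 3$, one must still exclude $p=2<d$; that is the genuinely nontrivial point, handled in Lemma 7 of \cite{CK} by passing to a quotient of minimal order (so that the bound need only be verified there), and it is exactly the step this paper cites rather than reproves. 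To complete your proof you would need to add the factor-through-$\Sigma_n$ case and either reproduce the minimal-quotient argument for $p=d$ or import it as a black box; only then does substituting $p\geq 3$ into Proposition \ref{Prop:SizeOfImage} for the split set of size at least $\lfloor n/2\rfloor$ give the stated bound.
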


In Section \ref{SubSec:StrengtheningBnBound}, we strengthen the lower bound found in Lemma \ref{Lem:BoundonBn}.
Before we strengthen the lower bound, we introduce the following well known fact about the braid group, which can be found in \cite{Artin}.
This lemma provides sufficient conditions for when then image of a homomorphism of the braid group is cyclic.

\begin{lem}\label{Lem:CyclicImage} For $n>4$, and let $\phi \from B_n \to G$ be a group homomorphism where $G$ is any group. 
If there exists $i, i+1\leq n-1$ so that $\phi(\sigma_i)$ commutes with $\phi(\sigma_{i+1})$ then $\phi $ is cyclic.
\end{lem}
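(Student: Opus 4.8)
The plan is to bootstrap from a single pair of commuting generators to all adjacent pairs, and then invoke the braid relation to collapse generators. First I would use the braid relation $\sigma_i\sigma_{i+1}\sigma_i=\sigma_{i+1}\sigma_i\sigma_{i+1}$ together with the hypothesis $[\phi(\sigma_i),\phi(\sigma_{i+1})]=1$: in $G$ this becomes $\phi(\sigma_i)^2\phi(\sigma_{i+1})=\phi(\sigma_{i+1})^2\phi(\sigma_i)$, and cancelling gives $\phi(\sigma_i)=\phi(\sigma_{i+1})$ (this is exactly the content of the commented-out Lemma \ref{Lemma:ImageCommutes}). So a single commuting adjacent pair forces those two generator images to be \emph{equal}.

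Next I would propagate this equality along the chain of generators. The key observation is that if $\phi(\sigma_i)=\phi(\sigma_{i+1})$, then $\phi(\sigma_i)$ certainly commutes with $\phi(\sigma_{i+1})$, but more usefully I want to reach a \emph{different} adjacent pair. Here I would use far commutativity: for $n>4$ there is an index $j$ with $|j-i|\ge 2$ and $|j-(i+1)|\ge 2$ only when $n$ is large enough, so instead the cleaner route is: since $\phi(\sigma_i)=\phi(\sigma_{i+1})=:x$, consider the pair $(\sigma_{i+1},\sigma_{i+2})$ (assuming $i+2\le n-1$). I cannot immediately conclude these commute. Instead I would argue via conjugation using the totally symmetric set structure, or more elementarily: the braid relation on $(\sigma_{i+1},\sigma_{i+2})$ gives $x\phi(\sigma_{i+2})x = \phi(\sigma_{i+2})x\phi(\sigma_{i+2})$ after substituting $\phi(\sigma_{i+1})=x$; this is a relation I would manipulate, but it does not obviously give commutativity. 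The robust fix is to use that in $B_n$ all generators are conjugate: there is an element $w$ with $w\sigma_i w^{-1}=\sigma_{i+1}$ and such a relation can be chosen so that it also conjugates $\sigma_{i+1}$ to some $\sigma_k$; applying $\phi$ transports the commuting relation $[\phi(\sigma_i),\phi(\sigma_{i+1})]=1$ to $[\phi(\sigma_{i+1}),\phi(\sigma_k)]=1$, and then the first step forces $\phi(\sigma_{i+1})=\phi(\sigma_k)$. Iterating, one shows all $\phi(\sigma_m)$ are equal, hence $\phi(B_n)$ is generated by a single element and is cyclic.

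Concretely, I expect the cleanest writeup to go: (1) prove $\phi(\sigma_i)=\phi(\sigma_{i+1})$ from the hypothesis via the braid relation; (2) observe $S_{\mathrm{odd}}$ and $S_{\mathrm{even}}$ both contain adjacent-to-adjacent overlapping structure and use that $\sigma_i,\sigma_{i+1}$ lie one in each (since consecutive indices have opposite parity); (3) by Remark \ref{rmk:OnlyConjugate} and the conjugation condition, since $\phi$ identifies two elements coming from $S_{\mathrm{odd}}$ and $S_{\mathrm{even}}$ respectively — actually use Lemma \ref{lem:SplitsOrNot}: $\phi(\sigma_i)=\phi(\sigma_{i+1})$ means $\phi$ does not split whichever totally symmetric set, wait, they are in different sets. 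The right statement: $\phi(\sigma_i)=\phi(\sigma_{i+1})$ and $\sigma_i\in S_{\mathrm{odd}}$ (say), $\sigma_{i+1}\in S_{\mathrm{even}}$; now also consider that $\sigma_{i+1}$ and $\sigma_{i+2}$ — whichever of $S_{\mathrm{odd}},S_{\mathrm{even}}$ contains $\sigma_i$ also contains $\sigma_{i+2}$, and by the conjugation condition applied within that set, there is $h$ with $h\sigma_i h^{-1}=\sigma_{i+2}$, $h\sigma_{i+1}h^{-1}=\sigma_{i+1}$ (for $n$ large enough to have room — this is where $n>4$ enters), so $\phi(\sigma_{i+2})=\phi(h)\phi(\sigma_i)\phi(h)^{-1}=\phi(h)\phi(\sigma_{i+1})\phi(h)^{-1}=\phi(\sigma_{i+1})$; similarly push down to $\sigma_{i-1}$. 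Iterating over the whole index range collapses all generator images to one element, so $\phi$ is cyclic.

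The main obstacle will be handling the boundary/small-$n$ cases where the conjugation element $h$ fixing $\sigma_{i+1}$ while moving $\sigma_i$ to $\sigma_{i+2}$ might not have "room" — this is presumably exactly why the hypothesis $n>4$ appears, and I would need to check that for $n=5$ the chain of conjugations still connects all of $\sigma_1,\dots,\sigma_{n-1}$ (one must verify the two totally symmetric sets $S_{\mathrm{odd}}, S_{\mathrm{even}}$ each have size $\ge 2$, i.e. $\lceil n/2\rceil\ge 2$ and $\lfloor n/2\rfloor \ge 2$, which holds for $n\ge 4$, and that the overlap pattern of consecutive indices lets the equality propagate). A secondary subtlety is making sure the argument only uses the conjugation condition (so that it is valid as stated), which Remark \ref{rmk:OnlyConjugate} licenses.
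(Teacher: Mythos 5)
Your first step is exactly the paper's: applying $\phi$ to the braid relation and using the hypothesis gives $\phi(\sigma_i)^2\phi(\sigma_{i+1})=\phi(\sigma_{i+1})^2\phi(\sigma_i)$, hence $\phi(\sigma_i)=\phi(\sigma_{i+1})$. Where you diverge is the propagation, and there you talked yourself out of the easy (and correct) route. You wrote, of the pair $(\sigma_{i+1},\sigma_{i+2})$, ``I cannot immediately conclude these commute'' --- but you can: far commutativity gives $[\sigma_i,\sigma_{i+2}]=1$ in $B_n$, so $\phi(\sigma_{i+2})$ commutes with $\phi(\sigma_i)=\phi(\sigma_{i+1})$, and step 1 applied to the adjacent pair $(\sigma_{i+1},\sigma_{i+2})$ yields $\phi(\sigma_{i+1})=\phi(\sigma_{i+2})$; likewise $[\sigma_{i-1},\sigma_{i+1}]=1$ handles $(\sigma_{i-1},\sigma_i)$, and induction collapses all generator images. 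That is the paper's entire proof; no conjugation machinery is needed.

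The detour you propose instead has a genuine gap as justified. The conjugation condition of Definition \ref{Def:TotallySymmetric} only prescribes how the conjugating element acts on the members of the totally symmetric set itself; since $\sigma_{i+1}$ does not lie in $S_{\mathrm{odd}}$ (resp.\ $S_{\mathrm{even}}$), total symmetry gives you no element $h$ with $h\sigma_i h^{-1}=\sigma_{i+2}$ \emph{and} $h\sigma_{i+1}h^{-1}=\sigma_{i+1}$. Such an $h$ does exist in $B_n$ (by a change-of-coordinates argument or an explicit word), but you would have to produce it and verify the extra fixing property, which your cited condition does not supply. Your ``robust fix'' has a related looseness: from $[\phi(\sigma_{i+1}),\phi(\sigma_k)]=1$ you can only conclude $\phi(\sigma_{i+1})=\phi(\sigma_k)$ when $\sigma_{i+1}$ and $\sigma_k$ satisfy a braid relation, i.e.\ $|k-(i+1)|=1$ (commuting images of distant generators need not be equal --- consider $B_n\to\Sigma_n$), so you must choose $w$ to be a specific shifting element such as $\sigma_1\sigma_2\cdots\sigma_{n-1}$, a fact you assert without establishing. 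So: step 1 is right, the propagation as written does not go through without repair, and the repair is simply the far-commutativity argument you dismissed.
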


Using Lemma \ref{Lem:CyclicImage}, we show that non-cyclic group homomorphisms split both $S_{even}$ and $S_{odd}$.

\begin{cor}\label{Cor:NoncyclicSplitsBoth}
For $n>5$, if $\phi \from B_n \to G$ is a non-cyclic group homomorphism, then $\phi$ must split both $S_{even}$ and $S_{odd}$.
\end{cor}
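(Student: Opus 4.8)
The plan is to prove the contrapositive: if $\phi$ does not split $S_{odd}$, or does not split $S_{even}$, then $\phi$ is cyclic. The first observation is that, because $S_{odd}$ and $S_{even}$ are totally symmetric, Lemma \ref{lem:SplitsOrNot} forces $|\phi(S_{odd})|$ to be $1$ or $|S_{odd}|$; since $n>5$ makes $|S_{odd}|,|S_{even}|\ge 2$, the statement ``$\phi$ does not split $S_{odd}$'' is equivalent to ``$\phi$ is constant on $S_{odd}$'', i.e. $\phi(\sigma_1)=\phi(\sigma_3)=\cdots$, and similarly for $S_{even}$. The strategy is then to combine such a coincidence of images with a far commutativity relation to exhibit two \emph{adjacent} generators whose images commute, and conclude via Lemma \ref{Lem:CyclicImage}.

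Concretely, in the case where $\phi$ does not split $S_{odd}$ I would use that $\sigma_1,\sigma_3\in S_{odd}$ (valid since $n>5$), so $\phi(\sigma_1)=\phi(\sigma_3)$, together with the fact that $\sigma_1$ commutes with $\sigma_4$ by far commutativity; hence $\phi(\sigma_4)$ commutes with $\phi(\sigma_1)=\phi(\sigma_3)$, which gives a commuting pair of images of the adjacent generators $\sigma_3,\sigma_4$, to which Lemma \ref{Lem:CyclicImage} applies and yields that $\phi$ is cyclic. In the case where $\phi$ does not split $S_{even}$ I would instead use $\phi(\sigma_2)=\phi(\sigma_4)$ together with the relation $\sigma_1\sigma_4=\sigma_4\sigma_1$ to see that $\phi(\sigma_1)$ commutes with $\phi(\sigma_4)=\phi(\sigma_2)$, so $\phi(\sigma_1)$ and $\phi(\sigma_2)$ commute, and again apply Lemma \ref{Lem:CyclicImage}. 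Taken together, these two cases say exactly that a non-cyclic $\phi$ must split both $S_{odd}$ and $S_{even}$.

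There is no serious obstacle here: once Lemmas \ref{lem:SplitsOrNot} and \ref{Lem:CyclicImage} are available, the proof is just a matter of choosing the right generators. The only point requiring care is checking that the generators invoked ($\sigma_3$ and $\sigma_4$ in the first case, $\sigma_4$ in the second) actually exist in $B_n$ and lie in the relevant totally symmetric set — which is precisely what the hypothesis $n>5$ guarantees.
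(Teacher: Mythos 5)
Your proposal is correct and follows essentially the same route as the paper: use Lemma \ref{lem:SplitsOrNot} to turn non-splitting into a coincidence of images, combine it with a far-commutativity relation to produce commuting images of two adjacent generators, and conclude with Lemma \ref{Lem:CyclicImage}. The only difference is the particular generators chosen (the paper uses $\phi(\sigma_2)=\phi(\sigma_4)$ with $\sigma_5$, ending at the adjacent pair $\sigma_4,\sigma_5$), which is immaterial.
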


\begin{proof}
Suppose that $\phi$ does not split $S_{even}$. 
Then $\phi(\sigma_2)=\phi(\sigma_4)$. 
Since $\sigma_2$ commutes with $\sigma_5$, then $\phi(\sigma_4)$ commutes with $\phi(\sigma_5)$.
By Lemma \ref{Lem:CyclicImage}, $\phi$ must be cyclic, which contradicts our assumption that $\phi$ is non-cyclic.
Thus, $\phi$ must split $S_{even}$.

By a similar computation, if $\phi$ does not split $S_{odd}$, then $\phi(\sigma_4)$ commutes with $\phi(\sigma_5)$, ultimately forcing $\phi$ to be cyclic.
Therefore, $\phi$ must also split $S_{odd}$.
\end{proof}

\subsection{Proof of Theorem \ref{thm:extendedbound}}\label{SubSec:StrengtheningBnBound}
In this section, we prove Theorem \ref{thm:extendedbound}, which provides a strengthened lower bound for the smallest non-cyclic finite quotient of $B_n$.
We begin by following the proof of Chudnovsky-Kordek-Li-Partin, then further their ideas by applying Lemma \ref{Lem:CyclicImage}.\\

\noindent \textbf{Theorem A:}
\textit{Let $n> 5$, and let $\phi:B_n\to G$ be a non-cyclic homomorphism to a finite group, $G$. Then, 
\[\abs{\phi(B_n)} \geq  \left( \left\lfloor\frac{n}{2}\right\rfloor +1\right)(3^{\lfloor\frac{n}{2}\rfloor - 1})\left\lceil \frac{n}{2} \right\rceil!.\]
Moreover, if $p$ is the smallest integer so that $\phi(\sigma_i)^p=\phi(\sigma_j)^p$ for any $i$, $j$, then
\[ \abs{\phi(B_n)} \geq  \left(\left(\mathrm{lpf}(p)-1\right) \left\lfloor\frac{n}{2}\right\rfloor +1\right)(3^{\lfloor\frac{n}{2}\rfloor - 1})\left\lceil \frac{n}{2} \right\rceil!,\] 
where $\mathrm{lpf}(p)$ is the least integer greater than 1 that divides $p$.} 
\newline

\begin{proof}[Proof of Theorem \ref{thm:extendedbound}]
In this proof, denote $\Odd=\phi(S_{odd})$, $\E=\phi(S_{even})$, $s_i=\phi(\sigma_i)$, $k=\left \lceil{\frac{n}{2}}\right \rceil$, $\B = \phi(B_n)$, let $d$ be the order of the $s_i$'s and let $p$ be the smallest integer so that $\phi(\sigma_i)^p=\phi(\sigma_j)^p$ for any $i$, $j$.

First, we show that if $\phi$ does not factor through the symmetric group, then $p = d \geq 3$.
If the order of $s_i$ is equal to one, then $\phi$ is cyclic, which contradicts our assumption that $\phi$ is not cyclic.
If the order of $s_i$ is equal to two, then $p$ is also equal to two, and $\phi$ factors through the symmetric group $\Sigma_n$.
Therefore, we may assume that $d=ord(\sigma_i) \geq 3$. 
 Since we are aiming for a lower bound, we may assume that $\phi(B_n)$ is the smallest possible quotient. 
 In this case, Caplinger and Kordek prove that $p = d \geq 3$ in Lemma 7 of \cite{CK}.

Since $\phi$ is not cyclic and $n>5$, each $s_i$ is distinct by Corollary \ref{Cor:NoncyclicSplitsBoth}. 
Therefore, $\Odd$ is a totally symmetric subset of size $k$ in $\B$ since it is the injective image of a totally symmetric subset of size $k$ in $B_n$.

Notice that $\B$ acts by conjugation on the set of totally symmetric subsets of $\B$ of size $k$, and let $\Gamma= \mathrm{Fix}_{\B}(\Odd)$. 
This gives us a surjection $\psi \from \Gamma \to \Sigma_k$, where $\Sigma_k$ is the symmetric group on $k$ elements. 

Under this action by $\B$, $\Odd$ fixes $\Odd$ pointwise since the elements of a totally symmetric set pairwise commute. 
This shows that $\langle \Odd \rangle \subseteq \Gamma$ and, in fact, $\langle \Odd \rangle \subseteq \mathrm{ker}(\psi)$. 
By Lemma \ref{Lemma:sizeoftot}, we have that $\abs{\langle \Odd \rangle} \geq p^{k-1}$, and since $p \geq 3$, $\mathrm{ker}(\psi) \geq 3^{k-1}$. 
It follows that
\begin{equation}\label{Eq:InitialBnBound}
 \abs{\B} \geq \abs{\Gamma} = \abs{\Sigma_k} \cdot \abs{\mathrm{ker}(\psi)}\geq k!(\abs{\langle S_{\phi}\rangle})\geq k!3^{k-1}.  
\end{equation}

We now begin improving the bound on $\abs{\B}$.
Notice that $\E=\{s_{2i}\}_{i=1}^{\lfloor \frac{n}{2} \rfloor}$ is a second totally symmetric set which consists of the images of the remaining generators of $B_n$. 
The elements in $\E$ are currently not accounted for in Equation \ref{Eq:InitialBnBound}. 
 To include these elements in the bound of $\abs{\B}$, we consider when elements of $\langle \E \rangle$ are not in $\Gamma$. 
 The following observations lead us to find elements of $\langle \E \rangle$ that are not in $\Gamma$.
  We then count distinct cosets of $\Gamma$ in $\B$. 
 \\

\begin{addmargin}[2em]{0em}
\noindent \textit{\underline{Observation 1}:} Suppose there exists an $m \in \{ 1, \ldots, p-1 \}$ so that $s_i^m \in \Gamma$, then $s_i^m\in \ker\psi$.

By definition of $\Gamma$, $s_i^m$ acts on $\Odd$ by conjugation, fixing $\Odd$ set-wise. 
By the relations of the braid group, $s_i^m$ commutes with every element of $\Odd$ except for $s_{i\pm1}$. 
Since $\Odd$ is fixed set-wise, then either conjugation by $s_i^m$ swaps the elements $s_{i+1}$ and $s_{i-1}$, or fixes the elements pointwise.
Suppose first that conjugation by $s_i^{m}$ swaps the elements $s_{i+1}$ and $s_{i-1}$, meaning $s_i^m s_{i-1} s_i^{-m} = s_{i+1}$. 
Then
\[
s_{i+2}(s_{i+1})s_{i+2}^{-1} = s_{i+2}(s_i^ms_{i-1}s_i^{-m})s_{i+2}^{-1} = s_i^ms_{i-1}s_i^{-m}=s_{i+1},
\]
which shows that $s_{i+2}$ and $s_{i+1}$ commute. 
By Lemma \ref{Lem:CyclicImage}, $\phi$ must be cyclic. 
If $i$ is large enough so that either $i+1 > n-1$ or $i+2 > n-1$, an analogous argument shows that $s_{i-2}$ and $s_{i-1}$ commute and that $\phi$ is cyclic. 
In both cases, we have contradicted our assumption that $\phi$ is non-cyclic. 
Thus, conjugation by $s_i^m$ does not swap the elements $s_{i+1}$ and $s_{i-1}$, but rather fixes these elements pointwise. 
Therefore, for all $i$, conjugation by $s_i^m$ fixes every element of $S$ pointwise.
This implies that if $s_i^m\in \Gamma$, then $s_i^m\in \ker\psi$.\\

\noindent \textit{\underline{Observation 2}:} $s_i \not\in \Gamma$ for $i$ even.

 Suppose that $s_i \in \Gamma$. Observation 1 implies $s_i \in \ker\psi$ and $s_i$ commutes with every other $s_j$. 
 Lemma \ref{Lem:CyclicImage} implies that $\phi$ is a cyclic map, a contradiction.
Therefore, $s_i \not\in \Gamma$.\\

\noindent \textit{\underline{Observation 3}:} $s_i^m \not \in \Gamma$ for $i$ even and $m$ relatively prime to $p$.

Suppose that $s_i^m \in \Gamma$ for some $m \geq 2$ and $i$ even. 
By Observation 1, $s_i^m\in \ker\psi$, and commutes with every element of $\Odd$ and $\E$. 
Since $s_i^m$ commutes with every $s_j$, this implies that $s_i^m$ is central in the image of $\phi$. 
If there exists an integer $r$ so that  $(s_i^m)^r=s_i$, then this implies $s_i$ is also central in the image of $\phi$, which by Lemma \ref{Lem:CyclicImage}, implies that $\phi$ is cyclic, a contradiction. 
Thus, for integers $m$ that are relatively prime to $p=ord(s_i)$, the elements $s_i^m$ cannot be elements of $\ker\psi$, and hence are not elements in $\Gamma$. 
For the powers $m$ that are not relatively prime to $p$, there is no contradiction and it is possible for $s_i^m$ to be in $\ker\psi$.\\

\noindent \textit{\underline{Observation 4}:} $s_i^{m_1}\Gamma\neq s_j^{m_2}\Gamma$ when $i$ and $j$ are even, $m_1,m_2$ are relatively prime to $p$, and $m_1-m_2$ is relatively prime to $p$.

From Observation 3, $s_i^{m_1},s_j^{m_2} \notin \Gamma$ for every $i,j$ even and $m_1,m_2$ relatively prime to $p$. 
Suppose that $s_i^{m_1}\Gamma=s_j^{m_2}\Gamma$. 
This implies that $s_i^{-m_1} s_j^{m_2}\in \Gamma$.

If $i=j$, then $s_i^{-m_1} s_j^{m_2}\in \Gamma$ implies that $s_i^{m_2-m_1}\in \Gamma$. Then $m_2-m_1$ is not relatively prime to $p$, a contradiction.

If $i < j$, then $s_j$ commutes with $s_{i-1}$. Consider the action of $s_i^{-m_1} s_j^{m_2}$ on $s_{i-1}$ by conjugation: 
\[
s_j^{-m_2}s_i^{m_1}s_{i-1}s_i^{-m_1} s_j^{m_2}=s_i^{m_1}s_{i-1}s_i^{-m_1}.
\]
Since we supposed that $s_i^{-m_1} s_j^{m_2}\in\Gamma$, then conjugation by  $s_i^{-m_1} s_j^{m_2}$ fixes the set $\Odd$ setwise. 
Therefore, the above equation shows that $s_i^{m_1}s_{i-1}s_i^{-m_1}\in \Odd$.
The exponent $m_1$ was chosen so that $s_i^{m_1}\not\in \Gamma$, which means that $\Odd$ is not closed under conjugation by $s_i^{m_1}$. As described in Observation 1, $s_i^{m_1}s_{i-1}s_i^{-m_1}$ is not an element of $\Odd$ when $\phi$ is non-cyclic, a contradiction. Hence $s_i^{-m_1} s_j^{m_2} \not \in \Gamma$.\\

\noindent \textit{\underline{Observation 5}:} Counting the cosets of $\Gamma$.

Let $\mathrm{lpf}(p)$ be the least integer greater than 1 that divides $p$.
Notice that the set $\{ 2, \ldots, \mathrm{lpf}(p)-1, \mathrm{lpf}(p)+1 \}$ is a set of $\mathrm{lpf}(p) - 1$ integers which are each relatively prime to $p$, and pairwise their differences are relatively prime to $p$.
Together with Observation 4, this shows that there are $\mathrm{lpf}(p)-1$ distinct non-intersecting cosets of $\Gamma$ for each $s_i$ with $i$ even.
Since there are $|\E|$ distinct elements $s_i$ with $i$ even, we get $(\mathrm{lpf}(p)-1)|\E|$ distinct cosets of $\Gamma$.
By counting the distinct cosets of $\Gamma$, we obtain a lower bound for the complement of $\Gamma$ in the image of $\phi$.
\begin{equation}\label{eq:GammaBoundBn}
    \abs{\phi(B_n)-\Gamma}\geq (\mathrm{lpf}(p)-1)\abs{\E}\cdot\abs{\Gamma}
\end{equation}
\end{addmargin}

\noindent Combining all of these observations, we arrive at the following lower bound for $\abs{B_n}$,
\begin{equation}\label{Eq:BoundforBn}
\begin{aligned}
\abs{\phi(B_n)}\geq \abs{\Gamma}+((\mathrm{lpf}(p)-1)\abs{\E})\abs{\Gamma}&=((\mathrm{lpf}(p)-1)\abs{\E}+1)\cdot\abs{\Gamma}\\
&\geq((\mathrm{lpf}(p)-1)\abs{\E}+1)(\abs{\langle \Odd\rangle})k! .
\end{aligned}
\end{equation}
 By substituting the values of $k$, $\abs{\langle \Odd \rangle}$, and $\abs{\E}$, we arrive at our final result:
\begin{equation}
   \abs{\phi(B_n)} \geq  \left(\left(\mathrm{lpf}(p)-1\right) \left\lfloor\frac{n}{2}\right\rfloor +1\right)(3^{\lfloor\frac{n}{2}\rfloor - 1})\left\lceil \frac{n}{2} \right\rceil!.
\end{equation}
For all even values of $p$, we have that $\mathrm{lpf}(p)-1 = 1$, which gives the minimal bound
\begin{equation}
  \abs{\phi(B_n)} \geq  \left( \left\lfloor\frac{n}{2}\right\rfloor +1\right)(3^{\lfloor\frac{n}{2}\rfloor - 1})\left\lceil \frac{n}{2} \right\rceil!.
\end{equation}
\end{proof}

\section{Totally symmetric sets in the Virtual and Welded Braid Groups }\label{Sec:TTSinVBnandwBn}

In this section, we introduce two generalizations of the braid group, namely, the virtual braid group and the welded braid group.
For each group, we give examples of totally symmetric sets, as well as provide the important lemmas we require to prove our main results, Theorems \ref{thm:wBn} and \ref{Thm:vBn}.

\subsection{The virtual braid group}\label{SubSection:VirtualBraidGroup}

Let $vB_n$ denote the \textit{virtual braid group} on $n$ strands.
This group has generators $\sigma_1, \ldots ,\sigma_{n-1}$ and $\tau_1, \ldots, \tau_{n-1}$.
The generators $\sigma_1, \ldots ,\sigma_{n-1}$ satisfy the classical braid group relations, and the generators $\tau_1, \ldots, \tau_{n-1}$ generate the symmetric group. 
There are also some mixing relations.
We list all relations in the virtual braid group, below:
\begin{enumerate}
    \item $\sigma_i \sigma_j = \sigma_j \sigma_i$ for $\abs{i-j} > 1$ \hfill (Far Commutativity)
    \item $\sigma_i \sigma_{i+1} \sigma_i = \sigma_{i+1} \sigma_i \sigma_{i+1}$ for $1 \leq i \leq n-2$ \hfill (Braid Relation)
    \item $\tau_i^2 = 1$ for $1 \leq i \leq n-1$ \hfill ($\tau$ is a Transposition)
    \item $\tau_i \tau_j = \tau_j \tau_i$ for $\abs{i-j} > 1$ \hfill ($\tau$ Far Commutativity)
    \item $\tau_i \tau_{i+1} \tau_i = \tau_{i+1} \tau_i \tau_{i+1}$ for $1 \leq i \leq n-2$ \hfill ($\tau$ Braid Relation)
    \item $\sigma_i \tau_j = \tau_j \sigma_i$ for $\abs{i-j} > 1$ \hfill (Mixed Far Commutativity)
    \item $\tau_{i+1} \sigma_i \tau_{i+1} = \tau_i \sigma_{i+1} \tau_i$ for $1 \leq i \leq n-2$ \hfill (Mixed Braid Relation)
\end{enumerate}
We note that these relations encode the \textit{virtual (or extended) Reidemeister moves}.

From this point of view, $vB_n$ is the free product of the braid group and the symmetric group modulo relations (6) and (7), $vB_n=B_n*\Sigma_n\slash _{(6),(7)}$. 
This presentation is nice in the sense that you can ``see" the braid group as a subgroup of the virtual braid group. 
The canonical embeddings of $B_n$ and $\Sigma_n$ in $vB_n$ are $B_n=\langle \sigma_1, \cdots ,\sigma_{n-1}\rangle$ and $\Sigma_n=\langle \tau_1, \cdots ,\tau_{n-1}\rangle$.

Another presentation of $vB_n$ highlights a key difference between the virtual braids and the non-virtual braids. 
The pure virtual braid group, $PvB_n$, is a subgroup of $vB_n$ which is the kernel of the projection $vB_n\to \Sigma_n$ by sending $\sigma_i\mapsto \tau_i$ and $\tau_i\mapsto \tau_i$. 
Unlike the classical braid group, $vB_n$ splits as a semidirect product, $vB_n\cong PvB_n\rtimes \Sigma_n$ \cite{Bard}.

\begin{figure}[h]
\begin{picture}(100,75)
\put(-53,5){\includegraphics[width=225\unitlength]{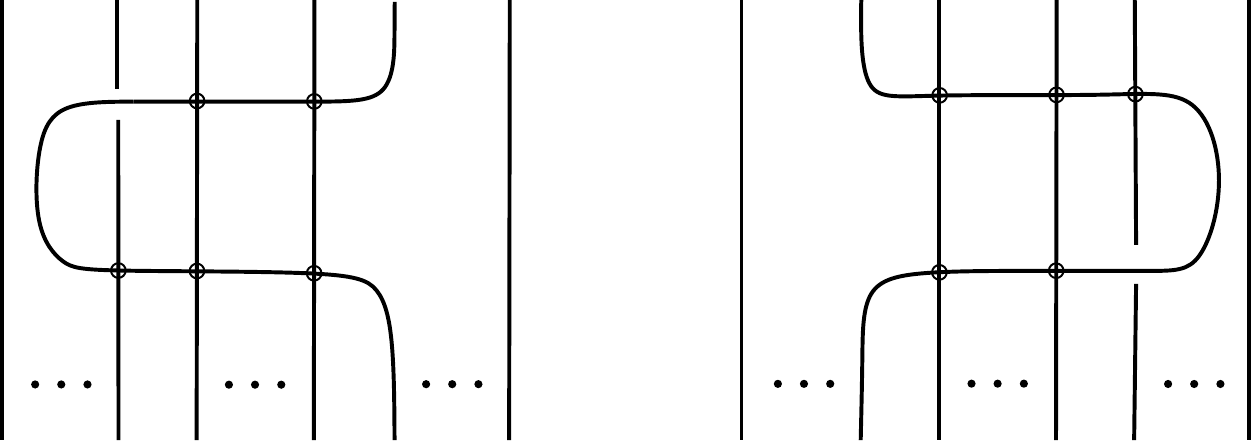}}
\put(-37,-6){\footnotesize{$j$}}
\put(-26,-6){\footnotesize{$j\text{+}1$}}
\put(-5,-6){\footnotesize{$i\text{ - }1$}}
\put(17,-6){\footnotesize{$i$}}
\put(99,-6){\footnotesize{$i$}}
\put(109,-6){\footnotesize{$i\text{+}1$}}
\put(128,-6){\footnotesize{$j\text{ - }1$}}
\put(148,-6){\footnotesize{$j$}}
\put(-68,0){(a)}
\put(57,0){(b)}
\end{picture}
\caption{(a) The element $\sigma_{i,j}$ when $j<i$. (b) The element $\sigma_{i,j}$ when $i<j$.}
\label{Fig:labellingsphere} 
\end{figure}

There are several important elements in $vB_n$, denoted $\sigma_{i,j}$, which are of the form \[
 \sigma_{i,j} = \tau_i \tau_{i+1} \ldots \tau_{j-2} \tau_{j-1} \sigma_{j-1} \tau_{j-2}  \ldots \tau_{i+1} \tau_i
 \]
 when $i<j$ and 
 \[
\sigma_{i,j} = \tau_{i-1} \tau_{i-2} \ldots \tau_{j-2} \tau_{j-1} \sigma_{j} \tau_{j} \tau_{j-1} \ldots \tau_{i-1}
 \]
 
 \noindent when $j<i$. 
 These generators are depicted in Figure \ref{Fig:labellingsphere}.
 One useful presentation for $PvB_n$ is generated by the $\sigma_{i,j}$ elements and the following two relations:
 
 \hangindent=0.7cm
\hangafter=0
 \noindent \textit{Commutivity Relation:} $\sigma_{i,j}\sigma_{k,l}=\sigma_{k,l}\sigma_{i,j}$ where $\abs{\{ i,j,k,l \}} = 4$
 
 \hangindent=0.7cm
\hangafter=0
 \noindent \textit{Braid Relation:} $\sigma_{i,j}\sigma_{i,k}\sigma_{j,k} = \sigma_{j,k}\sigma_{i,k}\sigma_{i,j}$ where $\abs{\{ i,j,k \}} = 3$

 There are many ways to embed $PvB_n$ into $vB_n$, where the presentation above is called the \textit{canonical embedding}.
 Subsection \ref{Subsec:WeldedBraids} will give more details for the generators in this embedding.

\subsubsection{Totally symmetric sets in the virtual braid group}\label{Subsec:TotSymvBn}
Since $B_n$ is a subgroup of $vB_n$, the sets $S_{odd} = \{ \sigma_{2i-1} \}_{i=1}^{\lceil n/2 \rceil} $ and $S_{even} = \{ \sigma_{2i} \}_{i=1}^{\lfloor n/2 \rfloor }$ are also totally symmetric subsets of $vB_n$. 
Additionally the sets $T_{odd} = \{ \tau_{2i-1} \}_{i=1}^{\lceil n/2 \rceil}$ and $T_{even} = \{ \tau_{2i} \}_{i=1}^{\lfloor n/2 \rfloor}$ are totally symmetric subsets of $vB_n$. 
A fun way to see why $T_{odd}$ and $T_{even}$ are totally symmetric is they are the homomorphic image of $S_{even}$ and $S_{odd}$ under the canonical projection map from $B_n\to \Sigma_n$.

The sets $\{\tau_i\sigma_i\}_{even}$ and $\{\tau_i\sigma_i\}_{odd}$ are totally symmetric sets in $vB_n$. 
They commute by a combination of relations (1),(5), and (7). 
The conjugation condition holds since you can swap $\tau_i\sigma_i$ with $\tau_{i+2}\sigma_{i+2}$ by conjugation under $\tau_{i+1}\tau_{i+2}\tau_{i}\tau_{i+1}$, which leaves all other elements of the set fixed.

\subsection{ The welded braid group }\label{Subsec:WeldedBraids} 
The welded braid group, $wB_n$, is a quotient of $vB_n$ by the \textit{Over Crossings Commute} relation, or ``OC" relation, defined as $\tau_i\sigma_{i+1}\sigma_{i}=\sigma_{i+1}\sigma_i\tau_{i+1}$ \cite{BS}.

Recall from Section \ref{Subsec:TotSymvBn}, a presentation for $PvB_n$ is generated by the elements denoted $\sigma_{i,j}$.  
These elements also generate the pure welded braid group, $PwB_n$, under the canonical embedding of $PwB_n$ into $wB_n$. 
Analogous to the virtual braid group, $wB_n$ is a semideirect product of the pure welded braid group and the symmetric group, $wB_n=PwB_n\rtimes \Sigma_n$.
Through communication with Dror Bar-Natan, we learned that the OC relation implies that $\sigma_{i,k}\sigma_{i,j}=\sigma_{i,j}\sigma_{i,k}$, and a proof of this fact can be found in \cite{SV}.
The OC relation allows us to find totally symmetric sets in $wB_n$ consisting of elements of the form $\sigma_{i,j}$.

\subsubsection{Totally symmetric sets in $wB_n$}\label{Subsec:TotallySymmetricwBn}

All of the totally symmetric sets in $vB_n$ are also totally symmetric in $wB_n$. 
Due to the OC relation, $wB_n$ has additional totally symmetric sets coming from subsets of the $\sigma_{i,j}$ elements.

If $i<j$, we call $\sigma_{i,j}$ a \textit{right generator},  and is shown in Figure \ref{Fig:labellingsphere} (b).
We denote the set of right generators with fixed $i$ as $R_i=\{\sigma_{i,j}\}_{j>i}^n$. 
If $i<j$, we call $\sigma_{i,j}$ a \textit{left generator}, and is  shown in Figure \ref{Fig:labellingsphere} (a).
For a fixed $i$, the set of left generators is denoted by $L_i=\{\sigma_{i,j}\}_{i>j}^n$.  
Let $A_i= L_i \cup R_i$ be the set of all elements of the form $\sigma_{i,j}$ which have the same first index.
The sets $A_i$, $R_i$ and $L_i$ are totally symmetric sets in $wB_n$.

\begin{lem}\label{lem:totSetsInwBn}
For each integer $1\leq i\leq n$, 
\begin{enumerate}
    \item $A_i$ is a totally symmetric set in $wB_n$ of size $n-1$.
    \item $R_i$ is a totally symmetric set in $wB_n$ of size $n-i$.
    \item $L_i$ is a totally symmetric set in $wB_n$ of size $i-1$.
\end{enumerate}
\end{lem}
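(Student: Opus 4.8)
The plan is to verify the two conditions of Definition~\ref{Def:TotallySymmetric} directly for $A_i$, and then to observe that $R_i$ and $L_i$, being subsets of $A_i$, inherit the commutativity condition for free and satisfy the conjugation condition by means of conjugating elements chosen so as to preserve the relevant blocks of indices. (Throughout it is understood that $i$ ranges over the values for which the sets are nonempty, i.e.\ $1\le i\le n-1$ for $R_i$, $2\le i\le n$ for $L_i$, and $1\le i\le n$ for $A_i$.) For the commutativity condition, every element of $A_i$ has the form $\sigma_{i,j}$ with first index $i$, so any two of them commute by the consequence of the OC relation recorded above, namely $\sigma_{i,j}\sigma_{i,k}=\sigma_{i,k}\sigma_{i,j}$, whose proof is in \cite{SV}. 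This gives the commutativity condition for $A_i$, hence also for the subsets $R_i$ and $L_i$.

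For the conjugation condition I would use the semidirect product decomposition $wB_n\cong PwB_n\rtimes\Sigma_n$ together with the fact that conjugation by a permutation $w\in\Sigma_n\le wB_n$ acts on the pure generators by relabelling indices, $w\,\sigma_{a,b}\,w^{-1}=\sigma_{w(a),w(b)}$ (it makes no difference to the argument if the correct formula has $w^{-1}$ in place of $w$, since one simply conjugates by the inverse). Granting this, identify $A_i$ with its index set $J=\{1,\dots,n\}\setminus\{i\}$ via $j\mapsto\sigma_{i,j}$; given any permutation of $A_i$, viewed as a bijection $\rho$ of $J$, let $w\in\Sigma_n$ be the permutation acting as $\rho$ on $J$ and fixing $i$, so that $w\,\sigma_{i,j}\,w^{-1}=\sigma_{i,\rho(j)}$ realizes $\rho$; since $|J|=n-1$ this produces all $(n-1)!$ permutations of $A_i$. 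For $R_i$, a permutation $\rho$ of $\{i+1,\dots,n\}$ extends to $w\in\Sigma_n$ fixing $\{1,\dots,i\}$ pointwise; such $w$ fixes $i$ and maps $\{i+1,\dots,n\}$ to itself, so conjugation by $w$ permutes $R_i$ according to $\rho$. The case of $L_i$ is symmetric, using permutations of $\{1,\dots,i-1\}$ extended to fix $\{i,\dots,n\}$.

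For the sizes, it suffices to know that the generators $\sigma_{a,b}$ with distinct ordered pairs $(a,b)$, $a\neq b$, are distinct elements of $wB_n$. This is standard: the assignment $\sigma_{a,b}\mapsto e_{ab}$ into the free abelian group on symbols $e_{ab}$ respects all three families of defining relations among the $\sigma_{a,b}$ (each abelianizes trivially), hence descends to a homomorphism $PwB_n\to\mathbb{Z}^{n(n-1)}$ sending distinct generators to distinct basis vectors. Consequently $A_i=\{\sigma_{i,j}:j\neq i\}$ has $n-1$ elements, $R_i=\{\sigma_{i,j}:j>i\}$ has $n-i$ elements, and $L_i=\{\sigma_{i,j}:j<i\}$ has $i-1$ elements.

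The one step that genuinely requires work rather than citation is the relabelling formula $w\,\sigma_{a,b}\,w^{-1}=\sigma_{w(a),w(b)}$. Proving it from the presentation amounts to unwinding $\sigma_{a,b}$ as an explicit word in the $\tau$'s and a single classical generator $\sigma_{j-1}$ (resp.\ $\sigma_j$) and pushing a $\tau_k$ through it using mixed far commutativity~(6) and the mixed braid relation~(7); by multiplicativity in $w$ it is enough to check the case $w=\tau_k$ a standard transposition. This computation is available in the literature on pure virtual and welded braid groups (see \cite{Bard}), so I would cite it and merely indicate the reduction to transpositions rather than reproduce the calculation in full.
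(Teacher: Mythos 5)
Your proposal is correct and follows essentially the same route as the paper: commutativity of $A_i$ comes from the OC relation $\sigma_{i,j}\sigma_{i,k}=\sigma_{i,k}\sigma_{i,j}$, and the conjugation condition is obtained by conjugating by permutations in the canonical copy of $\Sigma_n\subseteq wB_n$, which act by relabelling the indices of the $\sigma_{i,j}$ (the paper's $wB_n=PwB_n\rtimes\Sigma_n$ argument). The differences are matters of detail rather than of approach: the paper verifies the needed transpositions explicitly (conjugation by $\tau_j$ for $j\neq i\pm1$, and by $\tau_{i-1}\tau_i\tau_{i-1}$) instead of citing the general formula $w\,\sigma_{a,b}\,w^{-1}=\sigma_{w(a),w(b)}$, it handles $R_i$ and $L_i$ simply as subsets of $A_i$, and it takes the distinctness of the $\sigma_{i,j}$ (your abelianization step) for granted.
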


\begin{proof}
Fix $i$. 
By definition, $\abs{R_i}=n-i$, $\abs{L_i}=i-1$, and $\abs{A_i}=\abs{R_i}+\abs{L_i}=n-1$.
Since $R_i$ and $L_i$ are subsets of $A_i$, it suffices to show that $A_i$ is a totally symmetric set in $wB_n$.
The elements in $A_i$ all have the same first index $i$, and commute by the OC relation. 
Therefore, we need only to show that every permutation of the elements in $A_i$ can be achieved via conjugation by an element in $wB_n$.

From the semidirect product decomposition $wB_n=PwB_n\rtimes \Sigma_n$,  $\Sigma_n$ acts on $PwB_n$ by conjugation. This action permutes the indices of the braid generators, $\sigma_j$, and in turn, permutes the pure welded braid generators, $\sigma_{i,j}$.

For $j\neq i\pm 1$, conjugation by $\tau_j$ transposes $\sigma_{i,j}$ and $\sigma_{i,j+1}$ while leaving every other $\sigma_{i,k}$ fixed. 
Conjugation by $f = \tau_{i-1}\tau_{i}\tau_{i-1}$ transposes $\sigma_{i,i-1}$ and $\sigma_{i,i+1}$, but fixes every other element in $A_i$. 
To see this, notice:
\begin{equation*}
    \begin{aligned}
    f \sigma_{i,i-1} f^{-1} &= (\tau_{i-1}\tau_{i}\tau_{i-1})(\sigma_{i-1}\tau_{i-1})(\tau_{i-1}\tau_{i}\tau_{i-1})\\
    &= \tau_{i-1}\tau_{i}\tau_{i-1} (\tau_i \tau_i) \sigma_{i-1} \tau_{i}\tau_{i-1}&\text{insert $\tau_{i}\tau_i$}\\
    &= \tau_{i-1}\tau_{i}\tau_{i-1} \tau_i (\tau_{i-1} \sigma_{i} \tau_{i-1}
    )\tau_{i-1}&\text{mixed braid relation}\\
    &= \tau_{i-1}\tau_{i}(\tau_{i} \tau_{i-1} \tau_{i}) \sigma_{i} &\text{ $\tau$-braid relation, cancel $\tau_{i-1}\tau_{i-1}$}\\
    &= \tau_{i} \sigma_{i} = \sigma_{i,i+1}. &\text{ cancel $\tau_{i}\tau_{i}$ and $\tau_{i-1}\tau_{i-1}$}
    \end{aligned}
\end{equation*}

\end{proof}

The OC relation is required for the sets $R_i$, $L_i$ and $A_i$ to satisfy the commutation condition. 
These sets are \textit{not} totally symmetric in $vB_n$, but do satisfy the conjugation condition in $vB_n$.

\subsubsection{Important lemmas}

There are many ways to embed $B_n$, $\Sigma_n$ and $PvB_n$ as subgroups inside $vB_n$, and respectively, to embed $B_n$, $\Sigma_n$ and $PwB_n$ inside $wB_n$. 
The canonical embeddings are given by the identifications $B_n= \langle \sigma_i \rangle_{i=1}^{n-1}\subseteq vB_n$, $
\Sigma_n= \langle \tau_i \rangle_{i=1}^{n-1}\subseteq vB_n$ and $PvB_n= \langle \sigma_{i,j}\rangle _{i\neq j} \subseteq vB_n$. 
From here on, when we refer to the restriction of a map on $vB_n$ (resp. $wB_n)$ to $B_n$, $\Sigma_n$ or $PvB_n$ (resp. $PwB_n$), we are referring to the canonical embeddings of these groups.

Recall from the introduction that a map $\phi$ is called cyclic (resp. abelian) if its image is cyclic (resp. abelian). 

\begin{lem}\label{Lem:TauCommute}
If $\phi \from vB_n \to G$ is a group homomorphism so that $\phi$ restricted to either $\Sigma_n$ or $B_n$ is abelian, then $\phi$ abelian.
\end{lem}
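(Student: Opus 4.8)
The plan is to show that abelianness of $\phi$ on one of the canonical subgroups propagates to the other, and then to the mixed relations, so that $\phi(vB_n)$ is generated by pairwise commuting elements. The key structural tool is the Mixed Braid Relation (7), $\tau_{i+1}\sigma_i\tau_{i+1}=\tau_i\sigma_{i+1}\tau_i$, which ties each $\sigma$-generator to the $\tau$-generators, together with the fact (Lemma \ref{Lem:CyclicImage}) that a braid-type relation forces adjacent images to coincide once they commute. I would treat the two hypotheses separately but symmetrically.

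First, suppose $\phi$ restricted to $\Sigma_n=\langle\tau_1,\dots,\tau_{n-1}\rangle$ is abelian. Write $t_i=\phi(\tau_i)$ and $s_i=\phi(\sigma_i)$. Since the $t_i$ pairwise commute, applying $\phi$ to the Mixed Braid Relation (7) gives $t_{i+1}^2 s_i = t_i^2 s_{i+1}$ after moving the $\tau$'s past each other; using $t_i^2=1$ (relation (3)) this collapses to $s_i=s_{i+1}$ for every admissible $i$, so all $s_i$ are equal to a single element, call it $s$. Then $\phi(B_n)=\langle s\rangle$ is cyclic, hence abelian. It remains to check that $s$ commutes with every $t_j$: for $|i-j|>1$ this is Mixed Far Commutativity (6); for $j=i$ or $j=i\pm1$, substitute $s_i=s$ into (7) to get $t_{i+1}st_{i+1}=t_i s t_i$, i.e. $(t_it_{i+1})s=s(t_it_{i+1})$, and combined with the $|i-j|>1$ commutations and the fact that consecutive $\tau$'s generate, one deduces $st_j=t_js$ for all $j$. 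Hence every generator of $\phi(vB_n)$ commutes with every other, so $\phi$ is abelian.

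Second, suppose $\phi$ restricted to $B_n=\langle\sigma_1,\dots,\sigma_{n-1}\rangle$ is abelian. Then $s_i$ commutes with $s_{i+1}$, so by Lemma \ref{Lem:CyclicImage} (applicable since $n>5$) the map $\phi|_{B_n}$ is cyclic: all $s_i$ equal a common element $s$. Applying $\phi$ to (7) with all $s_i=s$ yields $t_{i+1}st_{i+1}=t_i s t_i$. Now run essentially the symmetric argument: this relation plus relation (5) (the $\tau$-braid relation) and (4) ($\tau$ far commutativity) should force the $t_i$ to pairwise commute — concretely, $t_{i+1}st_{i+1}=t_is t_i$ rearranges to $(t_i t_{i+1})$ centralizing $s$, and one plays this against the far-commutativity relations to pin down the $\tau$-subgroup's image; alternatively, one shows $\phi$ factors through a quotient in which the $\tau$'s are central. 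Once the $t_i$ commute among themselves we are back in the situation of the first case, and the argument there shows $\phi$ is abelian.

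The main obstacle I anticipate is the second direction: starting from abelianness on $B_n$, the relations (3)–(5) governing the $\tau_i$ are not obviously enough on their own to force the $t_i$ to commute, since $\Sigma_n$ is genuinely non-abelian. The leverage must come entirely from the mixed relation (7) (and possibly (6)), and making the bookkeeping of which generators commute with which rigorous — rather than hand-waving "combine the relations" — is the delicate part; I would expect to need a short induction on the index, propagating commutation of $t_j$ with $s$ and then deducing $t_j t_{j\pm1}$-type relations, mirroring the inductive structure already used in the (elided) proof of Lemma \ref{Lem:CyclicImage}. The first direction, by contrast, is essentially immediate from $t_i^2=1$ and relation (7).
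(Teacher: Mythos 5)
Your first half (abelian on $\Sigma_n$) follows the paper's route but contains an unjustified manipulation: from $t_{i+1}s_i t_{i+1}=t_i s_{i+1} t_i$ you pass to $t_{i+1}^2 s_i=t_i^2 s_{i+1}$ by ``moving the $\tau$'s past each other,'' which would require $\tau_{i+1}$ to commute with $\sigma_i$ and $\tau_i$ with $\sigma_{i+1}$; mixed far commutativity only applies when the indices differ by at least two, so neither commutation is available. The repair is exactly what the paper does: since the $t_j$ pairwise commute, the $\tau$ braid relation gives $t_i^2t_{i+1}=t_{i+1}^2t_i$, hence $t_i=t_{i+1}=g$ for all $i$; the mixed braid relation then reads $gs_ig=gs_{i+1}g$, so $s_i=s_{i+1}$, and commutation of the common value $s$ with $g$ follows from mixed far commutativity by choosing $j$ with $\abs{i-j}>1$. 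With that fix this half is essentially the paper's argument, and your extra detour through $(t_it_{i+1})s=s(t_it_{i+1})$ is unnecessary.

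The genuine gap is the second half (abelian on $B_n$), which you only sketch and explicitly flag as the delicate step; your worry is well founded, and the route you propose cannot be completed. Once all $s_i$ equal a common $s$, the mixed braid relation yields only that each $t_it_{i+1}$ centralizes $s$, and this carries no information when $s$ is central in the image. Concretely, the assignment $\sigma_i\mapsto 1$, $\tau_i\mapsto\tau_i$ respects all seven defining relations of $vB_n$ (the mixed relations become $\tau_j=\tau_j$ and $1=1$), so it defines a surjection $vB_n\to\Sigma_n$ that is trivial, hence abelian, on the canonical copy of $B_n$, while its image is all of $\Sigma_n$; a variant sending $\sigma_i$ to a central element of order $2$ shows the same with a nontrivial restriction to $B_n$. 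So no bookkeeping with relations (3)--(7) alone can force the $t_i$ to commute, and the induction you envision has nothing to run on. For comparison, the paper's own proof of this half is the single assertion that the mixed braid relation makes $\phi$ cyclic on $\Sigma_n$, which is precisely the step these examples contradict; this direction of the statement needs additional input (for instance the OC relation of $wB_n$, which rules the examples out), whereas the $\Sigma_n$ half is sound as in the paper.
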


\begin{proof}
Suppose $\phi$ restricted to $\Sigma_n$ is abelian. 
The $\tau$ braid relation gives that $\phi(\tau_i)=\phi(\tau_{i+1})$ for all $i$, and so $\phi$ is cyclic on $\Sigma_n$. 
Denote $\phi(\tau_i)=g$. 
Applying $\phi$ to the mixed braid relation yields 
\begin{align*}
    \phi(\tau_{i+1}\sigma_i\tau_{i+1})&=\phi(\tau_i\sigma_{i+1}\tau_i)\\
    g\phi(\sigma_i)g&=g\phi(\sigma_{i+t})g\\
    \phi(\sigma_i)&=\phi(\sigma_{i+1})
\end{align*} 
This shows that $\phi$ restricted to $B_n$ is also cyclic, and therefore $\phi$ is abelian.

Assume that $\phi$ restricted to $B_n$ is abelian. 
A similar argument using the braid relations shows that $\phi$ is cyclic on $B_n$, and the mixed braid relation shows that $\phi$ is cyclic on $\Sigma_n$.
\end{proof}

\begin{cor}\label{Cor:wBnAbelian}
If $\phi \from wB_n \to G$ is a group homomorphism so that $\phi$ restricted to either $\Sigma_n$ or $B_n$ is abelian, then $\phi$ abelian.
\end{cor}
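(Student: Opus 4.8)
The plan is to deduce this directly from Lemma \ref{Lem:TauCommute} by pulling back along the quotient map. Recall that $wB_n$ is defined as the quotient of $vB_n$ by the OC relation, so there is a surjective homomorphism $q \from vB_n \twoheadrightarrow wB_n$ sending each generator $\sigma_i$ to $\sigma_i$ and each $\tau_i$ to $\tau_i$. In particular, $q$ carries the canonical copy of $B_n$ (resp.\ $\Sigma_n$) inside $vB_n$ onto the canonical copy of $B_n$ (resp.\ $\Sigma_n$) inside $wB_n$, since both groups are presented on the same generating set $\{\sigma_i,\tau_i\}$ and $q$ fixes these generators.

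First I would form the composite $\widetilde\phi = \phi \circ q \from vB_n \to G$. If $\phi$ restricted to the canonical $B_n \subseteq wB_n$ is abelian, then because $q$ identifies the two copies of $B_n$, the restriction of $\widetilde\phi$ to the canonical $B_n \subseteq vB_n$ is also abelian; the identical reasoning applies with $\Sigma_n$ in place of $B_n$. Hence in either case $\widetilde\phi$ satisfies the hypotheses of Lemma \ref{Lem:TauCommute}, so $\widetilde\phi$ is abelian, i.e.\ $\widetilde\phi(vB_n)$ is an abelian subgroup of $G$.

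Finally, since $q$ is surjective we have $\phi(wB_n) = \phi(q(vB_n)) = \widetilde\phi(vB_n)$, which we have just shown is abelian. Therefore $\phi$ is abelian, as claimed.

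The argument is essentially formal, so I do not expect a genuine obstacle; the only point needing (minor) care is the bookkeeping that $q$ intertwines the canonical embeddings of $B_n$ and $\Sigma_n$ into $vB_n$ and $wB_n$, which is immediate from the presentations. One could alternatively prove the corollary by repeating the two short computations in the proof of Lemma \ref{Lem:TauCommute} verbatim inside $wB_n$ (all the relations used there — the $\tau$-braid relation and the mixed braid relation — still hold in $wB_n$), but the reduction via $q$ is cleaner.
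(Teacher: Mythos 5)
Your proof is correct and is essentially the paper's own argument: the paper proves this corollary precisely by noting that the relations used in Lemma \ref{Lem:TauCommute} persist in $wB_n$, and explicitly offers your exact alternative of applying Lemma \ref{Lem:TauCommute} to $\phi \circ p$ where $p \from vB_n \to wB_n$ is the quotient projection. Nothing further is needed.
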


\begin{proof}
The relations used in the proof of Lemma \ref{Lem:TauCommute} also hold in $wB_n$, so the same proof can be used here. 
Alternatively, let $p \from vB_n \to wB_n$ be the quotient projection map, and apply Lemma \ref{Lem:TauCommute} to the map $\phi \circ p$.
\end{proof}

The following lemma is a key step to proving Theorems \ref{thm:wBn} and \ref{Thm:vBn}. 
To use totally symmetric sets to count the cardinality of the image of a homomorphism, the homomorphism needs to split a totally symmetric set. 
This lemma shows that, under the right conditions, when some subset of the totally symmetric sets $\{A_i\}_{i=1}^{n}$ do not split under a map $\phi \from wB_n \to G$, we can use the images of the $A_i$'s which do not split to create a new totally symmetric set in the image.

\begin{lem}[Hot Air Balloon Lemma]\label{Lem:HotAirBalloon}
Let $\{A_{i_1},\cdots A_{i_m}\}$ be a subset of $\{A_1,\cdots, A_n\}$, the totally symmetric sets in $wB_n$. Let $\phi \from wB_n \to G$ be a non-abelian group homomorphism. 
Suppose $\phi$ does not split $A_{i_j}$ for all $i_j$, and each $\phi(A_{i_j})^2$ is a distinct element in the image. 
Then the set $\{\phi(A_{i_j})^2\}_{j=1}^m$ is a totally symmetric set in $\phi(wB_n)$ of size $m$.
\end{lem}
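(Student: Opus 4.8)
The plan is to verify the two defining conditions of a totally symmetric set for $\{\phi(A_{i_j})^2\}_{j=1}^m$ inside $\phi(wB_n)$: pairwise commutativity, and the conjugation condition. The distinctness of the $\phi(A_{i_j})^2$ is given as a hypothesis, so the set genuinely has size $m$, and the only work is checking the two relations. Throughout I will use the structural fact established in Lemma \ref{lem:totSetsInwBn}, namely that each $A_i = \{\sigma_{i,j}\}_{j\ne i}$ is a totally symmetric set in $wB_n$ whose elements all share the first index $i$, together with the OC-relation consequence $\sigma_{i,k}\sigma_{i,j}=\sigma_{i,j}\sigma_{i,k}$. Since $\phi$ does not split $A_{i_j}$, Lemma \ref{lem:SplitsOrNot} gives $|\phi(A_{i_j})|=1$, so $\phi(A_{i_j})$ is a single element of $\phi(wB_n)$; write $a_j$ for it, so that the candidate set is $\{a_j^2\}_{j=1}^m$. (The reason for squaring rather than taking $a_j$ itself will be forced by the commutativity step.)

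For the \textbf{commutativity condition}, I would fix two indices $i_j \ne i_k$ and study how $a_j$ and $a_k$ interact. The elements of $A_{i_j}$ have first index $i_j$ and the elements of $A_{i_k}$ have first index $i_k$; in $wB_n$ two generators $\sigma_{i_j,\ell}$ and $\sigma_{i_k,\ell'}$ commute whenever their four indices are distinct, and when indices overlap they satisfy the braid relation $\sigma_{a,b}\sigma_{a,c}\sigma_{b,c}=\sigma_{b,c}\sigma_{a,c}\sigma_{a,b}$. Choosing $\ell, \ell'$ appropriately (avoiding $i_j, i_k$ and each other — possible since $n$ is large), one gets $\sigma_{i_j,\ell}\sigma_{i_k,\ell'}=\sigma_{i_k,\ell'}\sigma_{i_j,\ell}$, hence $a_j a_k = a_k a_j$ already at the level of the unsquared elements, provided $m \le n-2$ or so. If instead one cannot always dodge the shared-index case, the braid relation $\sigma_{i_j,i_k}\sigma_{i_j,\ell}\sigma_{i_k,\ell}=\sigma_{i_k,\ell}\sigma_{i_j,\ell}\sigma_{i_j,i_k}$ combined with $\phi(A_{i_j})$ and $\phi(A_{i_k})$ being single elements forces $a_j a_j a_k = a_k a_j a_j$ after applying $\phi$ (using $\phi(\sigma_{i_j,i_k})=a_j=a_k$-type collapses), which is exactly $a_j^2 a_k = a_k a_j^2$; squaring once more and using that $a_j^2$ commutes with $a_j$ yields $a_j^2 a_k^2 = a_k^2 a_j^2$. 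This is precisely why the statement uses the squares. I expect this commutativity step — tracking which case of the braid/commutation relations applies and confirming the square is what survives — to be the main obstacle, since it requires carefully exploiting that $\phi$ collapses each $A_{i_j}$ to a point.

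For the \textbf{conjugation condition}, I would lift permutations from $wB_n$. Given a permutation $\rho$ of $\{1,\dots,m\}$, I want $h_\rho \in \phi(wB_n)$ conjugating $a_{j}^2$ to $a_{\rho(j)}^2$. The symmetric group $\Sigma_n \le wB_n$ acts by conjugation on the $\sigma_{i,j}$, permuting first indices: for a permutation $w \in \Sigma_n$ sending $i_j \mapsto i_{\rho(j)}$, conjugation by (a representative word in the $\tau$'s for) $w$ carries $A_{i_j}$ to $A_{i_{\rho(j)}}$ setwise. Applying $\phi$ and using that $\phi(A_{i_j})=\{a_j\}$, conjugation by $\phi(w)$ sends $a_j$ to $a_{\rho(j)}$, hence $a_j^2$ to $a_{\rho(j)}^2$; one should note $w$ can be chosen to realize any permutation of the chosen index set (this is the change-of-coordinates/transposition-generation argument already invoked in Lemma \ref{lem:totSetsInwBn}), so take $h_\rho = \phi(w)$. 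Combining the two verified conditions with the size-$m$ hypothesis gives that $\{\phi(A_{i_j})^2\}_{j=1}^m$ is a totally symmetric set in $\phi(wB_n)$, completing the proof.
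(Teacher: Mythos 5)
Your proposal is correct, and its overall skeleton matches the paper's proof: collapse each $A_{i_j}$ to a single element $a_j$ via Lemma \ref{lem:SplitsOrNot}, get commutativity of the squares by applying $\phi$ to the braid relation $\sigma_{i,j}\sigma_{i,k}\sigma_{j,k}=\sigma_{j,k}\sigma_{i,k}\sigma_{i,j}$ (yielding $a_j^2a_k=a_ka_j^2$), and get the conjugation condition for the squares from the conjugation condition for the unsquared $a_j$'s realized by images of permutations. The genuine difference is in the conjugation step: the paper works generator-by-generator, showing conjugation by $\phi(\tau_i)$ swaps $g_i,g_{i+1}$ and then runs a six-case analysis to exhibit, for each $k\neq i,i+1$, a specific $\sigma_{k,-}$ literally fixed by $\tau_i$; you instead observe that conjugation by any $w\in\Sigma_n\leq wB_n$ sends each $A_k$ to $A_{w(k)}$ \emph{setwise} and that $\phi$ collapses each $A_k$ to a point, so $\phi(w)$ conjugates $a_j$ to $a_{\rho(j)}$ directly -- this subsumes the paper's case analysis and is cleaner, and it is legitimate since the paper itself invokes the $wB_n=PwB_n\rtimes\Sigma_n$ index-permuting action in Lemma \ref{lem:totSetsInwBn}. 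You also note that the unsquared elements already commute via the relation $\sigma_{i,j}\sigma_{k,l}=\sigma_{k,l}\sigma_{i,j}$ for four distinct indices, which the paper does not use; that observation is correct (the right condition is $n\geq 4$ so that suitable $\ell,\ell'$ exist, not ``$m\leq n-2$''), and in any case your braid-relation fallback is exactly the paper's argument. Two small wording slips worth fixing: the parenthetical ``$\phi(\sigma_{i_j,i_k})=a_j=a_k$'' should read $\phi(\sigma_{i_j,i_k})=a_j$ (the value is determined by the first index; $a_j$ and $a_k$ are in fact distinct here), and the hypothesis that the squares are distinct is what guarantees the set has size $m$, which you do use correctly.
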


\begin{figure}[ht]
\includegraphics[width=115\unitlength]{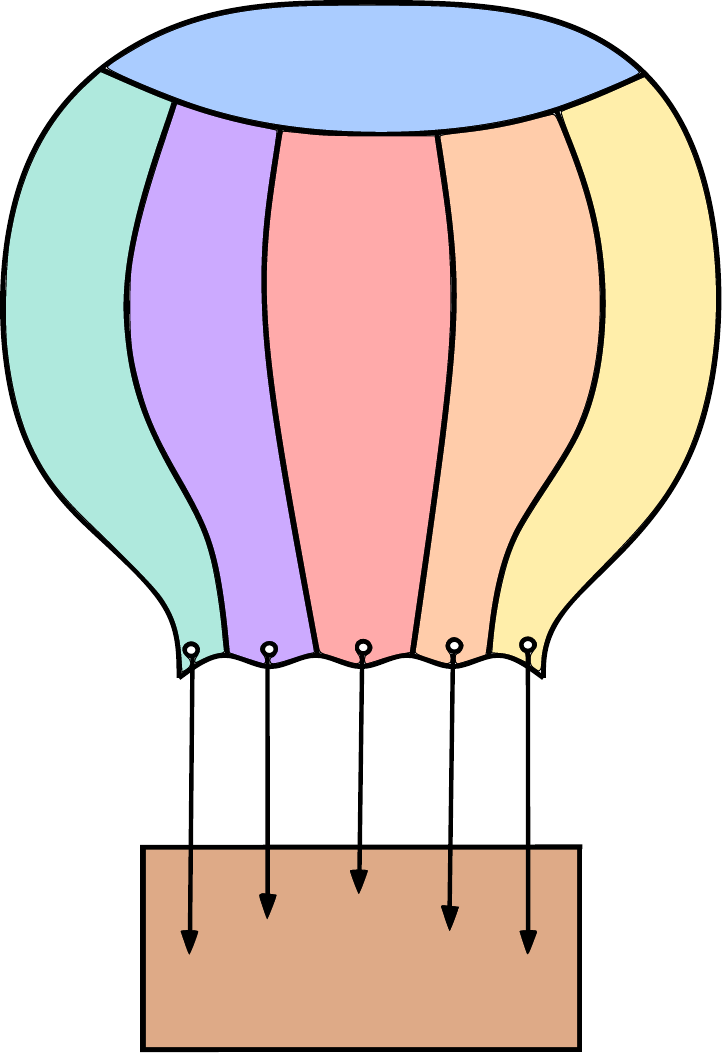}
\put(-112,120){\small{$A_{i_1}$}}
\put(-89,118){\small{$A_{i_{2}}$}}
\put(-42,118){\small{$A_{i_{m\text{-}1}}$}}
\put(-18,120){\small{$A_{i_m}$}}
\put(-88,7){\small{$g_1$}}
\put(-76,14){\small{$g_2$}}
\put(-50,14){\small{$g_{\text{\textit{m}-1}}$}}
\put(-36,7){\small{$g_m$}}
\put(-56,47){$\phi$}
\put(-65,18){$\ldots$}
\put(-65,116){$\ldots$}
\caption{Schematic diagram for the Hot Air Balloon Lemma.}
\end{figure}

\begin{proof}
We will prove this lemma for the case where $\{A_{i_1},\cdots A_{i_m}\}=\{A_1,\cdots, A_m\}$, as all other cases follow from an analogous proof with possible re-indexing. 

Let $g_i=\phi(A_i)$. We will show that the set $\{g_i^2\}_{i=1}^m$ is a totally symmetric set in $\phi(wB_n)$ of size $m$. 

By assumption, the $g_i^2$'s are distinct, so the set $\{g_i^2\}_{i=1}^m$ has $m$ elements. Notice that every element of $A_i$ is of the form $\sigma_{i,j}$, where the first index, $i$, remains fixed. Since $\phi$ does not split any of the totally symmetric sets $A_i$, $\phi(\sigma_{i,j})$ is determined by its first index $i$, i.e., $\phi(\sigma_{i,j})=g_i$.

For the commutation condition, applying $\phi$ to the braid relation in $wB_n$ shows
\begin{equation*}
    \begin{aligned}
    \sigma_{i,j}\sigma_{i,k}\sigma_{j,k} &= \sigma_{j,k}\sigma_{i,k}\sigma_{i,j}\\
    \phi(\sigma_{i,j})\phi(\sigma_{i,k})\phi(\sigma_{j,k}) &= \phi(\sigma_{j,k})\phi(\sigma_{i,k})\phi(\sigma_{i,j})\\
    g_i g_i g_j &= g_j g_i g_i,
    \end{aligned}
\end{equation*}
which shows that for each $i$ and $j$, $g_i^2$ and $g_j$ commute. In turn, this implies that $g_i^2$ and $g_j^2$ commute. 

To show the conjugation condition holds, notice that if $f g_i f^{-1}=g_j$ then $f g_i^2 f^{-1}=g_j^2$. 
Therefore, it suffices to show the conjugation condition holds for the set $\{g_i\}$. 
The following computations show that conjugation by $\phi(\tau_i)$ swaps $g_i$ and $g_{i+1}$ but fixes all other $g_k$.

First, we show that conjugation by $\phi(\tau_i)$ swaps $g_i$ and $g_{i+1}$. There are two cases to consider:
\begin{addmargin}[2em]{0em}

\noindent \textit{Case 1:} Suppose $i\leq n-2$. A similar computation described in Lemma \ref{lem:totSetsInwBn} shows that conjugation by $\tau_i$ swaps $\sigma_{i,i+2}$ with $\sigma_{i+1,i+2}$. Thus
\begin{equation*}
    \begin{aligned}
    \phi(\tau_i\sigma_{i,i+1}\tau_i) &= \phi(\sigma_{i+1,i+2})\\
    \phi(\tau_i)g_i\phi(\tau_i) &= g_{i+1}.
    \end{aligned}
\end{equation*}

\noindent \textit{Case 2:} Suppose $i>n-2$, which implies that $i=n-1$.
A similar computation to the one above shows that conjugation by $\tau_{i-1}$ swaps $\sigma_{i,i-1}$ and $\sigma_{i+1,i-1}$ and that conjugation by $\phi (\tau_i)$ swaps $g_i$ and $g_{i+1}$.
\end{addmargin}

\noindent Next, we show that for $g_k$, where $k\neq i, i+1$, that $g_k$  remains fixed under conjugation by $\phi(\tau_i)$. 
To prove this, we must consider six different cases on $k$. 
In each case, it suffices to find a single $\sigma_{k,{-}}$ with first index $k$ so that $\sigma_{k,{-}}$ is fixed under conjugation by $\tau_i$.

\begin{addmargin}[2em]{0em}

\noindent \textit{Case 1:} If $k>i+1$,  $k\neq n$, then $\sigma_{k,k+1}$ is fixed by the commutation relations in $wB_n$.

\noindent \textit{Case 2:} If $k=n$, $i= n-2$, then $\sigma_{k,k-3}$ is fixed by the following computation:
\begin{equation*}
    \begin{aligned}
    \tau_{n-2} \sigma_{n,n-3} \tau_{n-2} &= \tau_{n-2} \tau_{n-1} \tau_{n-2} \sigma_{n-3} \tau_{n-3} (\tau_{n-2} \tau_{n-1} \tau_{n-2})\\
    &= \tau_{n-1} \tau_{n-2} \tau_{n-1} \sigma_{n-3} \tau_{n-3} (\tau_{n-1}) \tau_{n-2} \tau_{n-1}&\text{$\tau$ braid relation}\\
    &= \tau_{n-1} \tau_{n-2} \sigma_{n-3} \tau_{n-3} \tau_{n-2} \tau_{n-1}&\text{commute $\tau_{n-1}$ left} \\
    &= \sigma_{n,n-3}.
    \end{aligned}
\end{equation*}

\noindent \textit{Case 3:} If $k=n$, $i\neq n-2$, then $\sigma_{k,k-1}$ is fixed by the commutation relations in $wB_n$.

\noindent \textit{Case 4:} If $k<i$, $k\neq 1$, then $\sigma_{k, k-1}$ is fixed by the commutation relations in $wB_n$.

\noindent \textit{Case 5:} If  $k=1$, $i\neq 2$, then $\sigma_{k,k+1}$ is fixed by the commutation relations in $wB_n$.

\noindent \textit{Case 6:} If $k=1$ and $i= 2$, then $\sigma_{k, k+3}$ is fixed by a similar computation as in Case 2.
\end{addmargin}

Thus, for every $k\neq i, i+1$, there exists $\sigma_{k,j}$ that is fixed under conjugation by $\tau_i$. This shows that 
\begin{equation*}
    g_k = \phi(\sigma_{k,-}) = \phi(\tau_i\sigma_{k,-}\tau_i) =  \phi(\tau_i)g_k\phi(\tau_i),
\end{equation*}
which proves the conjugation condition in the definition of a totally symmetric set holds, and we have proven our claim.
\end{proof}

\begin{rmk}
The Hot Air Balloon Lemma is stated for $wB_n$, however it is also true for $vB_n$. 
In $vB_n$ the sets $A_i$ are not totally symmetric, but they do satisfy the conjugation condition, which is the only condition needed in the proof. 
\end{rmk}

\section{Finite Image homomorphisms  of the Virtual and Welded Braid Groups}\label{Sec:ApplicationtoWBn}

In this Section, we prove the classification theorems on the size of finite images of homomorphisms of both $wB_n$ and $vB_n$.

\subsection{Proof of Theorem \ref{thm:wBn}}

First we prove the classification theorem for the welded braid group, $wB_n$. 
We hope that this is a first step in classifying non-cyclic homomorphisms $wB_n \to G$, where $G$ is a finite group.\\

\noindent \textbf{Theorem \ref{thm:wBn}:}
 \textit{Let  $n> 5$, and let $\phi:wB_n\to G$ be a group homomorphism to a finite group, $G$.
One of the following must be true:
\begin{enumerate}
\item $\phi$ is abelian.
\item $\phi$ restricted to $PwB_n$ is cyclic.
\item $\abs{\phi(wB_n)}\geq 2^{n-2}(n-1)!$
\item For all $i$ and $j$, $\phi$ maps each $A_i$ to a single element with $\phi(A_i)^2\neq\phi(A_j)^2$, and 
\[\abs{\phi(wB_n)} \geq  \left( \left\lfloor\frac{n}{2}\right\rfloor +1\right)(3^{\lfloor\frac{n}{2}\rfloor - 1})\left\lceil \frac{n}{2} \right\rceil!.\]
\end{enumerate}
}
\vspace{5mm}

In the statement of Theorem \ref{thm:wBn}, the requirement that $n>5$ is only necessary for Part (3) due to the applications of Lemma \ref{Cor:NoncyclicSplitsBoth} and Theorem \ref{thm:extendedbound}. All other conditions hold for $n\geq 4$.

\begin{figure}[ht]
\begin{picture}(390,125)
\includegraphics[width=400\unitlength]{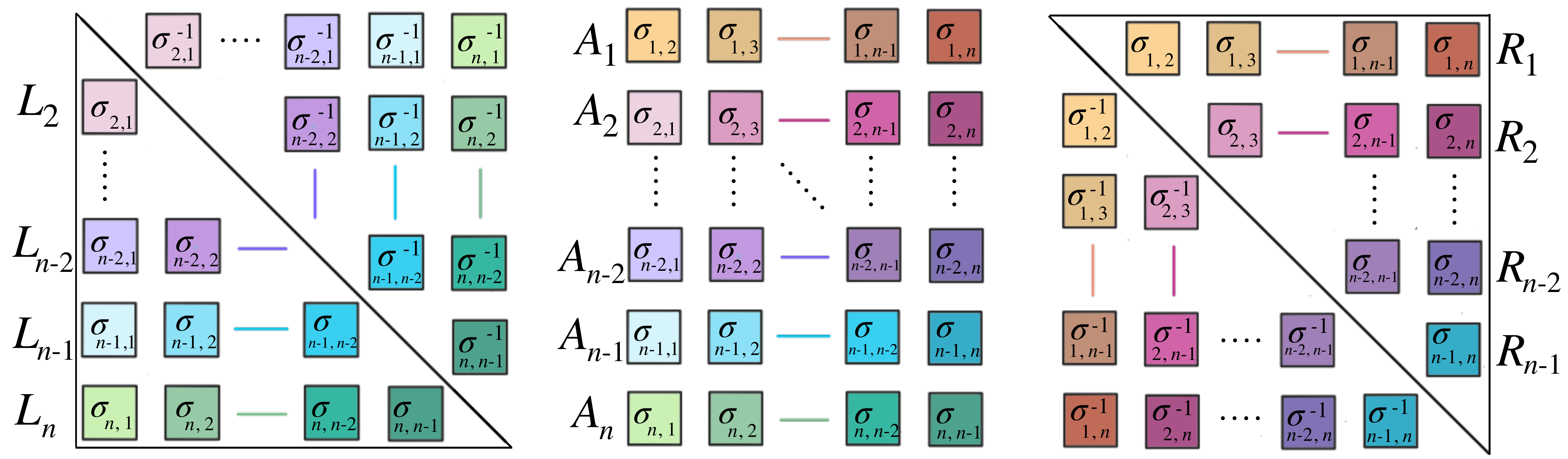}
\put(-400,-10){(a)}
\put(-260,-10){(b)}
\put(-145,-10){(c)}
\end{picture}
\caption{(a) Left diagram.  (b) Full diagram.  (c) Right diagram.}
\label{Fig:grids} 
\end{figure}

The proof of Theorem \ref{thm:wBn} is inspired by Figure \ref{Fig:grids}. 
We consider cases on whether $\phi$ splits various rows and columns of the diagrams. 
The rows of the Full diagram, as seen in Figure \ref{Fig:grids} (b), are the totally symmetric sets, $A_i$, from Lemma \ref{lem:totSetsInwBn}. 
In the Left diagram, the rows of the outlined triangle are the totally symmetric sets, $L_i$. 
The rows above the outlined triangle are the inverses of the columns within the outlined triangle. 
Both the columns in the outlined triangle and the rows above the outline are not totally symmetric sets, but satisfy the conjugation condition. 
This can be verified by similar computations described in Lemma \ref{lem:totSetsInwBn}. 
The Right  diagram has an analogous form.  
The rows of the outlined triangle are the totally symmetric sets, $R_i$. 
The columns below the outlined triangle are the inverses of the rows within the triangle, and both satisfy the conjugation condition. 

\begin{proof}[Proof of Theorem \ref{thm:wBn}]
Let us suppose that $\phi$ is non-abelian and that $\phi$ restricted to $PwB_n$ is non-cyclic. 
We consider cases on whether or not $\phi$ splits the totally symmetric sets $A_i$.

Case 1:  
Suppose that there exists an $i$ so that $\phi$ splits $A_i$. 
Since $A_i$ is a totally symmetric set with size $n-1$, applying Proposition \ref{Prop:SizeOfImage} yields 
\[
\abs{\phi(wB_n)}\geq 2^{n-2}(n-1)!.
\]

Case 2: Suppose $\phi$ does not split any of the $A_i$'s.
Denote $\phi(A_i)=\{g_i\}$. 
Further suppose that there exists $i_0$ and $j_0$ so that $g_{i_0}^2\neq g_{j_0}^2$. 
Notice this implies $g_{i_0}\neq g_{j_0}$. 
Since $\phi$ is non-abelian, we may assume by Lemma \ref{Lem:TauCommute} that $\phi$ is non-cyclic on $\Sigma_n$ and that $\phi(\tau_i)\neq id$. 
We consider cases on $i_0$ and $j_0$ with the goal to apply the Hot Air Balloon Lemma.

\begin{addmargin}[2em]{0em}
\noindent \textit{Subcase 1:} Suppose $i_0,j_0<n$. We will use the Right  diagram in Figure \ref{Fig:grids} to conclude that $g_1,\cdots, g_{n-1}$ are distinct. 
By assumption, $g_{i_0}\neq g_{j_0}$ which implies that $\phi(\sigma_{{i_0},n})\neq \phi(\sigma_{{j_0},n})$, and therefore $\phi(\sigma_{{i_0},n}^{-1})\neq \phi(\sigma_{{j_0},n}^{-1})$. 
The bottom row of the Right  diagram contains both $\sigma_{{i_0},n}^{-1}$ and  $\sigma_{{j_0},n}^{-1}$.  
Even though the bottom row of the Right  diagram is not a totally symmetric set, it does satisfy the conjugation condition.
Since $\phi(\sigma_{i_0,n}^{-1}) \neq \phi(\sigma_{i_1,n}^{-1})$, Remark \ref{rmk:OnlyConjugate} implies that $\phi$ splits the bottom row.
Thus $\phi(\sigma_{i,n}^{-1})\neq \phi(\sigma_{j,n}^{-1})$ for all $i,j< n$, which shows that $g_i\neq g_j$, for all $i,j<n$.  
Since $g_{i_0}^2\neq g_{j_0}^2$ by assumption, Remark \ref{remk:AfterEqualatSamePower} shows the each of the $g_i^2$ are unique. 
Thus, we have shown all of the hypotheses of the Hot Air Balloon Lemma are satisfied, and $\{g_1^2,\cdots, g_{n-1}^2\}$ is a totally symmetric set in the image of $\phi$ of size $n-1$.
Proposition \ref{Prop:SizeOfImage} yields 
\[
\abs{\phi(wB_n)}\geq 2^{n-2}(n-1)!.
\]

\noindent \textit{Subcase 2:} Suppose $i_0,j_0>1$. 
An analogous argument to Subcase 1 using the Left  diagram from Figure \ref{Fig:grids} concludes that $\{g_2^2,\cdots, g_n^2\}$ is a totally symmetric set in the image of $\phi$ of size $n-1$.
Proposition \ref{Prop:SizeOfImage} yields 
\[
\abs{\phi(wB_n)}\geq 2^{n-2}(n-1)!.
\]

\noindent \textit{Subcase 3:} Suppose $i_0=1$ and $j_0=n$, which implies that $g_1 \neq g_n$, and further that $\phi(\sigma_{1,-}) \neq \phi(\sigma_{n,-})$.
Looking at the Full diagram in Figure \ref{Fig:grids} (b), Subcase 3 analyzes when the top and bottom rows of the Full diagram are mapped to different elements. 

We now analyze where $\phi$ can send the second row.

Suppose first that $\phi$ maps $A_2$, or all the elements of the second row, to $g_1$.
Then in Figure \ref{Fig:grids} (a), the Left  diagram, we notice that $L_2$ and $L_{n}$ map to different elements.
Therefore, two elements in the top row of Figure \ref{Fig:grids} (a) map to different elements.
Since the top row satisfies the conjugation condition, we have that the top row must split.
Since $g_{i_0}^2\neq g_{j_0}^2$ by assumption, Remark \ref{remk:AfterEqualatSamePower} shows that each $g_i^2$ is unique.
Therefore, by the Hot Air Balloon Lemma, the set $\{ g_2^2, g_3^2, \ldots, g_n^2 \}$ is a totally symmetric set of size $n-1$.
Proposition \ref{Prop:SizeOfImage} yields 
\[
\abs{\phi(wB_n)}\geq 2^{n-2}(n-1)!.
\]

A similar argument follows for when $\phi$ maps $A_2$, or all the elements of the second row, to $g_n$, but this time we consider Figure \ref{Fig:grids} (c), the Right  diagram.
Since $R_1$ and $R_2$ map to different elements, the bottom row of the Right  diagram must split as it satisfies the conjugation relation.
Since $g_{i_0}^2\neq g_{j_0}^2$ by assumption, Remark \ref{remk:AfterEqualatSamePower} shows that each $g_i^2$ is unique.
Therefore, by the Hot Air Balloon Lemma, the set $\{ g_1^2, g_2^2, \ldots, g_{n-1}^2 \}$ is a totally symmetric set of size $n-1$.
In this case, Proposition \ref{Prop:SizeOfImage} will again yield 
\[
\abs{\phi(wB_n)}\geq 2^{n-2}(n-1)!.
\]

Finally, suppose that $\phi$ sends $A_2$ to an element $g_2$ where $g_2 \neq g_1, g_n$.
Then in Figure \ref{Fig:grids} (a), the Left  diagram, we notice that $L_2$ and $L_{n}$ map to different elements.
Therefore, two elements in the top row of Figure \ref{Fig:grids} (a) map to different elements.
Since the top row satisfies the conjugation condition, we have that the top row must split.
Similarly, in Figure \ref{Fig:grids} (c), the Right  diagram, we notice that $R_1$ and $R_{2}$ map to different elements.
Therefore, two elements in the bottom row of Figure \ref{Fig:grids} (c) map to different elements, and since the bottom row satisfies the conjugation condition the bottom row must split.
Notice that we must have that $\phi$ sends each $A_i$ to a unique element.
Indeed, suppose that $g_i = g_j$ for some $i, j$.
This implies that either the top row of the Left  diagram or the bottom row of the Right  diagram cannot split since these rows have the conjugation relation, which is a contradiction to the above.
Since $g_{i_0}^2\neq g_{j_0}^2$ by assumption, Remark \ref{remk:AfterEqualatSamePower} shows the each $g_i^2$ is unique.
By the Hot Air Balloon Lemma, the set $\{ g_1^2, g_2^2, \ldots, g_n^2 \}$ is a totally symmetric set of size $n$.
In this case, Proposition \ref{Prop:SizeOfImage} will yield that
\[
\abs{\phi(wB_n)}\geq 2^{n-1}(n)!.
\]
\end{addmargin}

Case 3: Suppose $\phi$ does not split any of the $A_i$'s, and  $\phi(A_i)^2=\phi(A_j)^2$ for all $i$ and $j$. 
Notice that the Hot Air Balloon Lemma does not apply, and that none of the $A_i$'s are split by $\phi$.
In this case, we use the fact that $\phi$ restricted to $B_n$ is non-cyclic. 
Applying Theorem \ref{thm:extendedbound} to $\phi$ restricted to $B_n$, we get 
\[\abs{\phi(wB_n)} \geq \abs{\phi(B_n)} \geq \left( \left\lfloor\frac{n}{2}\right\rfloor +1\right)(3^{\lfloor\frac{n}{2}\rfloor - 1})\left\lceil \frac{n}{2} \right\rceil!.\]

\end{proof}

\subsection{Proof of Theorem \ref{Thm:vBn}}

In this section, we provide a proof for Theorem \ref{Thm:vBn} which gives a lower bound on the size of $vB_n$'s smallest non-cyclic finite quotient.
By considering whether or not a homomorphism factors through $wB_n$, we may apply our classification of homomorphisms from $wB_n \to G$, or the necessary condition for the existence of a homomorphism $B_n \to G$, to determine a classification of the size of finite images of homomorphisms from $vB_n$.\\

\noindent \textbf{Theorem \ref{Thm:vBn}:}
\textit{
Let $n>5$, and let $\phi \from vB_n \to G$ be a group homomorphism to a finite group, $G$. One of the following must be true:
\begin{enumerate}
    \item $\phi$ is abelian.
    \item $\phi$ factors through $wB_n$, and either
    \begin{enumerate}
        \item  $\phi$ restricted to $PwB_n$ is cyclic.
        \item $\abs{\phi(vB_n)}\geq 2^{n-2}(n-1)!$
\item For all $i$ and $j$, $\phi$ does not split $A_i$, $\phi(A_i)^2\neq\phi(A_j)^2$, and 
\[\abs{\phi(vB_n)} \geq  \left( \left\lfloor\frac{n}{2}\right\rfloor +1\right)(3^{\lfloor\frac{n}{2}\rfloor - 1})\left\lceil \frac{n}{2} \right\rceil!.\]
    \end{enumerate}
    \item $\phi$ does not factor through $wB_n$ and 
\[\abs{\phi(vB_n)} \geq  \left( \left\lfloor\frac{n}{2}\right\rfloor +1\right)(3^{\lfloor\frac{n}{2}\rfloor - 1})\left\lceil \frac{n}{2} \right\rceil!.\]
\end{enumerate}
}

\begin{proof}[Proof of Theorem \ref{Thm:vBn}]
Suppose $\phi$ is not abelian. If $\phi$ factors through $wB_n$, then by Theorem \ref{thm:wBn}, one of either (2)(a), (2)(b), or (2)(c) must be true.
If $\phi$ does not factor through $wB_n$ and $\phi$ is non-abelian, Lemma \ref{Lem:TauCommute} gives that $\phi$ restricted to $B_n$ is non-abelian, and hence non-cyclic. 
Applying Theorem \ref{thm:extendedbound} to $\phi$ restricted to $B_n$ gives that (3) must be true.
\end{proof}

\bibliographystyle{plain}
\bibliography{virtualBraidBibli}

\end{document}